\documentclass{amsart}
\usepackage{graphicx}
\usepackage[nocompress]{cite}
\usepackage{tikz-cd}
\usepackage{subcaption}
\usepackage{mathrsfs}
\usepackage{amsopn,amssymb,amsmath}
\usepackage{url}
\usepackage{subcaption}
\usepackage{algorithmic}
\usepackage{algorithm}
\usepackage{mleftright}
\theoremstyle{definition}
\newtheorem{theorem}{Theorem}[section]
\newtheorem{prop}[theorem]{Proposition}
\newtheorem{defn}[theorem]{Definition}

\newtheorem{eg}[theorem]{Example}
\newtheorem{lemma}[theorem]{Lemma}
\newtheorem{remark}[theorem]{Remark}
\numberwithin{equation}{section}

%
\DeclareMathOperator{\Ima}{Im}

%



%
\begin{document}
\title{Weighted Persistent Homology}

\author{Shiquan Ren}
\address{\textit{a} School of Mathematics and Computer Science, Guangdong Ocean University, 1 Haida Road, Zhanjiang, China, 524088.\newline
\textit{b} Department of Mathematics,   National University of Singapore, Singapore, 119076.}
\email{sren@u.nus.edu}
\thanks{The project was supported in part by the Singapore Ministry of Education research grant (AcRF Tier 1 WBS No.~R-146-000-222-112). The first author was supported in part by the National Research Foundation, Prime Minister's Office, Singapore under its Campus for Research Excellence and Technological Enterprise (CREATE) programme. The second author was supported in part by the President's Graduate Fellowship of National University of Singapore. The third author was supported by a grant (No.~11329101) of NSFC of China.}

\author{Chengyuan Wu$^*$}
\address{Department of Mathematics, National University of Singapore, Singapore 119076}
\email{wuchengyuan@u.nus.edu}
\thanks{$^*$Corresponding author}

\author{Jie Wu}
\address{Department of Mathematics, National University of Singapore, Singapore 119076}
\email{matwuj@nus.edu.sg}

\keywords{Algebraic topology, Applied topology, Persistent homology, Weighted simplicial complex, Weighted persistent homology}
\subjclass[2010]{Primary 55U10, 55-02; Secondary 55N99}

\begin{abstract}
In this paper we develop the theory of weighted persistent homology. In 1990, Robert J.\ Dawson was the first to study in depth the homology of weighted simplicial complexes. We generalize the definitions of weighted simplicial complex and the homology of weighted simplicial complex to allow weights in an integral domain $R$. Then we study the resulting weighted persistent homology. We show that weighted persistent homology can tell apart filtrations that ordinary persistent homology does not distinguish. For example, if there is a point considered as special, weighted persistent homology can tell when a cycle containing the point is formed or has disappeared. 

\end{abstract}

\maketitle
\section{Introduction}

In topological data analysis, point cloud data refers to a finite set $X$ of points  in the Euclidean space $\mathbb{R}^n$, and the computation of persistent homology usually 
begins with the point cloud data $X$. 
In the 
classical approach of the persistent homology of $X$, each point in $X$ plays an equally important role, or in other words, each point has the same weight (cf. \cite{Zomorodian2005}). Then  $X$ is converted into a simplicial complex, for example, the \v{C}ech complex and the Vietoris-Rips complex (cf. \cite[Chap.~III]{Edelsbrunner2010}).

In this paper, we consider the situation that different points in $X$ may have different importance. Our point cloud data $X$ is  weighted, that is, each point 
in $X$ has a  weight. 
Some practical examples  where it is 
useful to consider weighted point cloud data are described in Section~\ref{subs-m}.

Our approach is to weight the boundary map. This is different from existing techniques of introducing weights to persistent homology. For instance in the paper \cite{Petri2013} by Petri, Scolamiero, Donato, and Vaccarino, weights are introduced via the \emph{weight rank clique filtration} with a thresholding of weights, where at each step $t$ the thresholded graph with links of weight larger than a threshold $\epsilon_t$. In the paper \cite{edelsbrunner2012persistent} by Edelsbrunner and Morozov, the weight of edges is also used to construct a filtration. The theory of weighted simplicial complexes we use is also significantly different from the theory of \emph{weighted alpha shapes} \cite{edelsbrunner1992weighted} by H.\ Edelsbrunner, which are polytopes uniquely determined by points, their weights, and a parameter $\alpha\in\mathbb{R}$ that controls the level of detail.

In his thesis \cite{curry2013sheaves}, J. Curry utilizes the barcode descriptor from persistent homology to interpret cellular cosheaf homology in terms of Borel-Moore homology of the barcode. In \cite[p.~244]{curry2013sheaves}, it is briefly mentioned that for applications, cosheaves should allow us to weight different models of the real world. In a subsequent work \cite{curry2016discrete} by J.\ Curry, R.\ Ghrist and V.\ Nanda, it is shown how sheaves and sheaf cohomology are powerful tools in computational topology, greatly generalizing persistent homology. An algorithm for simplifying the computation of cellular sheaf cohomology via (discrete) Morse-theoretic techniques is included in \cite{curry2016discrete}. In the recent paper \cite{kashiwara2017persistent}, M.\ Kashiwara and P.\ Schapira show that many results in persistent homology can be interpreted in the language of microlocal sheaf theory. We note that in \cite[p.~8]{kashiwara2017persistent}, a notion of weight is being used, where the closed ball $B(s;t)$ is being replaced by $B(s;\rho(s)t)$, where $\rho(s)\in\mathbb{R}_{\geq 0}$ is the weight. This notion of weight is more geometrical, which differs from our more algebraic approach of weighting the boundary operator.

In the seminal paper \cite{carlsson2009theory} by Carlsson and Zomorodian, the theory of multidimensional persistence of multidimensional filtrations is developed. In a subsequent work \cite{carlsson2009computing} by Carlsson, Singh and Zomorodian, a polynomial time algorithm for computing multidimensional persistence is presented. In \cite{cerri2013betti}, the authors Cerri, Fabio, Ferri, Frosini and Landi show that Betti numbers in multidimensional persistent homology are stable functions, in the sense that small changes of the vector-valued filtering functions imply only small changes of persistent Betti numbers functions. In \cite{xia2015multidimensional}, K. Xia and G. Wei introduce two families of multidimensional persistence, namely pseudomultidimensional persistence and multiscale multidimensional persistence, and apply them to analyze biomolecular data. The utility and robustness of the proposed topological methods are effectively demonstrated via protein folding, protein flexibility analysis, and various other applications. In \cite[p.~1509]{xia2015multidimensional}, a particle type-dependent weight function $w_j$ is introduced in the definition of the atomic rigidity index $\mu_i$. The atomic rigidity index $\mu_i$ can be generalized to a position ($\mathbf{r}$)-dependent rigidity density $\mu(\mathbf{r})$. Subsequently \cite[p.~1512]{xia2015multidimensional}, filtration is performed over the density $\mu(\mathbf{r})$. 

The main aim of our paper is to construct weighted persistent homology to study the topology  of weighted point cloud data. A weighted simplicial complex is a simplicial complex where each simplex is assigned with a weight. We  convert a weighted point cloud data $X$ into a weighted simplicial complex. 
In \cite{Dawson1990}, Robert J.\ Dawson was the first to study in depth the homology of weighted simplicial complexes. We use an adaptation of \cite{Dawson1990} to compute the homology of weighted simplicial complexes. In \cite{Dawson1990}, the weights take values in the set of non-negative integers $\{0,1,2,\dots\}$. We generalize \cite{Dawson1990} such that the weights can take values in any integral domain $R$ with multiplicative identity $1_R$. Finally, we study and analyze the weighted persistent homology of filtered weighted simplicial complexes.

\section{Background}

In this section, we review some background knowledge and give some preliminary definitions. We give some examples of weighted cloud data in Subsection~\ref{subs-m}
. We review the definitions of simplicial complexes in Subsection~\ref{subs2.2} and review some properties of rings and integral domains in Subsection~\ref{subs2.3}. We give the formal definitions of weighted point cloud data and weighted simplicial complex in Subsection~\ref{subs2.4}.

\subsection{
Examples of weighted cloud data}\label{subs-m}
As the motivation of this paper, we describe 
some practical problems with weight function on data. We look at typical applications of persistent homology, and consider the situation that data points may not be equally important. When some data points may be more important than others, mathematically it requires a weight function to give the difference between points. 

In the field of computer vision, Carlsson et al.\ \cite{Carlsson2008} develops a framework to use persistent homology to analyze natural images such as digital photographs. The natural image may be viewed as a vector in a very high-dimensional space $\mathcal{P}$. In the paper, the dimension of $\mathcal{P}$ is the number of pixels in the format used by the camera and the image is associated to the vector whose coordinates are grey scale values of the pixels. In certain scenarios, such as color detection in computer vision \cite{Pedreschi2006,Lu2000,Comaniciu1997}, each pixel may play different roles depending on its color. In this case, each pixel can then be given a different weight depending on its color. More generally, pixels in images can be weighted depending on its wavelength in the electromagnetic spectrum which includes infrared and ultraviolet light.

In the paper \cite{Carstens2013}, persistent homology is used to study collaboration networks, which measures how scientists collaborate on papers. In the collaboration network, there is a connection between two scientists if they are coauthors on at least one paper. Depending on the purpose of research, weights can be used to differentiate different groups of scientists, for example PhD students, postdoctoral researchers and professors, or researchers in different fields.

Lee et al.\ \cite{Lee2012} proposed a new framework for modeling brain connectivity using persistent homology. The connectivity of the human brain, also known as human connectome, is usually represented as a graph consisting of nodes and edges connecting the nodes. In this scenario, different weights could be assigned to different neurons in different parts of the brain, for example left/right brain, frontal lobe or temporal lobe.

There are many different ways to define the theory of weighted persistent homology (cf.\ \cite{Horak2013,Petri2013,Dawson1990}), and our definition is not unique. We will show that our definition satisfies some nice properties, including some properties related to category theory \cite{Lane1978} which is an important part of modern mathematics.


In this section we review the mathematical background necessary for our work. We assume all rings have the multiplicative identity $1$. First we define the concept of weighted point cloud data (WPCD). Then, similar to the unweighted case, we can convert the WPCD to a simplicial complex, using either the \v{C}ech complex or the Vietoris-Rips complex. Then, we define a weight function for the simplices so as to compute the weighted simplicial homology.
\subsection{Simplicial Complexes}\label{subs2.2}

The following definition of simplicial complexes can be found in \cite[p. 107]{Hatcher2002}. 
An \emph{(abstract) simplicial complex} is a collection $K$ of nonempty finite sets,  called \emph{(abstract) simplices}, such that if $\sigma\in K$, then every nonempty subset of $\sigma$ is in $K$. Let $K$ be a simplicial complex and let $\sigma\in K$. 
An element $v$ of  $\sigma$ is called a \emph{vertex}
, and any nonempty subset of $\sigma$ is called a \emph{face}
. For convenience, we do not distinguish between a vertex $v$ and the corresponding face $\{v\}$.

The definition of orientations of simplices is given in \cite[p. 105]{Hatcher2002}.  Let  $\sigma=
\{v_0,v_1,\dots,v_n\}$ be a simplex of a simplicial complex $K$. 
An orientation of $\sigma$ is given by an ordering of its vertices  $v_0,v_1,\dots,v_n$, with the rule that two orderings define the same orientation if and only if they differ by an even permutation. An oriented simplex $\sigma$ is written as $[v_0,v_1,\cdots,v_n]$.  


Let $\{x_\alpha\}$ be a set of points in the Euclidean space $\mathbb{R}^n$.  Let $\epsilon>0$.    The \emph{\v{C}ech complex}, 
denoted as $\mathcal{C}_\epsilon$, is the abstract simplicial complex where  $k+1$ vertices span a $k$-simplex if and only if  the $k+1$ corresponding closed $\epsilon/2$-ball neighborhoods of the vertices  have nonempty intersection (cf. \cite[p. 72]{Edelsbrunner2010}).
The \emph{Vietoris-Rips complex}, denoted as $\mathcal{R}_\epsilon$, is the abstract simplicial complex where  $k+1$ vertices span a $k$-simplex if and only if the distance between any pair of the $k+1$ vertices  is at most $\epsilon$ (cf. \cite[p.~74]{Edelsbrunner2010}).

\subsection{Rings}\label{subs2.3}
Throughout this section, we let $R$ to be a commutative ring with multiplicative identity. A nonzero element $a\in R$ is said to \emph{divide} an element $b\in R$ (denoted $a\mid b$) if there exists $x\in R$ such that $ax=b$.
A nonzero element $a$ in a ring $R$ is called a 
\emph{zero divisor} if there exists a nonzero $x\in R$ such that $ax=0$. 
A commutative ring $R$ with $1_R\neq 0$ and no zero divisors is called an \emph{integral domain} (cf. \cite[p.~116]{Hungerford}). 


Let $R$ be an integral domain.   Let $S$ be the set of all nonzero elements in $R$.  Then  we can construct the \emph{quotient field} $S^{-1}R$ (cf.  \cite[p.~142]{Hungerford}).

\begin{prop}{\cite[p.~144]{Hungerford}}
\label{embedq}
The map $\varphi_s:R\to S^{-1}R$ given by $r\mapsto rs/s$ (for any $s\in S$) is a monomorphism. Hence, the integral domain $R$ can be embedded in its quotient field.
\end{prop}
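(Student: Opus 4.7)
The plan is to verify directly that $\varphi_s$ is a well-defined ring homomorphism with trivial kernel, using the no-zero-divisor hypothesis on $R$ at the one crucial place.

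First I would recall the construction of $S^{-1}R$: its elements are equivalence classes $a/t$ with $a\in R,\ t\in S$, where $a_1/t_1 = a_2/t_2$ iff $a_1 t_2 = a_2 t_1$ (the usual condition simplifies to this because $R$ has no zero divisors, so no auxiliary element of $S$ is needed). Addition and multiplication are the expected $(a_1/t_1)+(a_2/t_2) = (a_1 t_2 + a_2 t_1)/(t_1 t_2)$ and $(a_1/t_1)(a_2/t_2) = (a_1 a_2)/(t_1 t_2)$, and $0_{S^{-1}R} = 0/s$, $1_{S^{-1}R} = s/s$ for any $s\in S$.

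Next I would verify that $\varphi_s$ is a ring homomorphism. For any $r_1, r_2 \in R$, one computes $\varphi_s(r_1) + \varphi_s(r_2) = (r_1 s)/s + (r_2 s)/s = (r_1 s \cdot s + r_2 s \cdot s)/s^2 = (r_1 + r_2)s^2/s^2$, and this equals $\varphi_s(r_1+r_2) = (r_1+r_2)s/s$ because $(r_1+r_2)s \cdot s^2 = (r_1+r_2)s^2 \cdot s$. The multiplicative identity check is the symmetric calculation. Finally $\varphi_s(1_R) = s/s = 1_{S^{-1}R}$.

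For injectivity, suppose $\varphi_s(r) = 0_{S^{-1}R}$, i.e.\ $rs/s = 0/s$. By the equivalence relation this means $rs \cdot s = 0 \cdot s$, so $rs^2 = 0$ in $R$. Since $s\in S$ is nonzero and $R$ is an integral domain, $s^2 \neq 0$; applying the no-zero-divisor property once more forces $r = 0$. Hence $\ker \varphi_s = 0$ and $\varphi_s$ is a monomorphism, embedding $R$ into $S^{-1}R$.

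There is no real obstacle here; the only substantive point is the injectivity step, and it is precisely where the integral domain hypothesis is used. The proof would run in under a page and is essentially a bookkeeping exercise following the construction of $S^{-1}R$.
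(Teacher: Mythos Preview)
Your proof is correct and is the standard textbook argument. Note, however, that the paper does not supply its own proof of this proposition: it is simply quoted from Hungerford with a page citation and no accompanying argument, so there is nothing in the paper to compare against. One minor wording quibble: where you write ``the multiplicative identity check is the symmetric calculation,'' you presumably mean the \emph{multiplicativity} check $\varphi_s(r_1 r_2)=\varphi_s(r_1)\varphi_s(r_2)$, since you then handle $\varphi_s(1_R)=1_{S^{-1}R}$ separately; the computation you intend is indeed parallel to the additive one and goes through without difficulty.
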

\begin{remark}
\label{identifyfrac}
Due to Proposition \ref{embedq}, we may identify $rs/s \in S^{-1}R$ with $r\in R$. We denote this as $\varphi_s^{-1}(rs/s)=r$, or simply $rs/s=r$ if there is no danger of confusion.
\end{remark}

\subsection{Weighted Simplicial Complexes}\label{subs2.4}

In the following definitions, we generalize Robert J.\ Dawson's paper \cite{Dawson1990} and define \emph{weighted point cloud data} and \emph{weighted simplicial complexes}, with weights in rings.

\begin{defn}[Weighted point cloud data]
Let $n$ be a positive integer. The \emph{point cloud data} $X$ in $\mathbb{R}^n$ is a finite subset of $\mathbb{R}^n$. Given some point cloud data $X$, 
a \emph{weight} on $X$ is a function $w_0: X\to R$, where $R$ is a commutative ring. The pair $(X,w_0)$ is called \emph{weighted point cloud data}, or \emph{WPCD} for short.
\end{defn}
Next, in Definition \ref{wscdef} we generalize the definition of \emph{weighted simplicial complex} in \cite[p.~229]{Dawson1990} to allow for weights in a commutative ring.
\begin{defn}[cf.\ {\cite[p.~229]{Dawson1990}}] 
\label{wscdef}
A \emph{weighted simplicial complex} (or \emph{WSC} for short) is  a pair $(K,w)$ consisting of a simplicial complex $K$ and a  function $w: K\to R$, where $R$ is a commutative ring, such that for any $\sigma_1, \sigma_2\in K$ with $\sigma_1\subseteq \sigma_2$, we have $w(\sigma_1)\mid w(\sigma_2)$.
\end{defn}
Given any weighted point cloud data $(X,w_0)$, we allow flexible definitions of extending the weight function $w_0$ to all higher-dimensional simplices, where the only condition to be satisfied is the divisibility condition in Definition \ref{wscdef}. One such definition is what we call the \emph{product weighting}.
\begin{defn}[Product weighting]
\label{product}
Let $(X,w_0)$ be a weighted point cloud data, with weight function $w_0: X\to R$ (where $R$ is a commutative ring). Let $K$ be a simplicial complex 
whose set of vertices is $X$. We define a weight function $w: K\to R$ by 
\begin{align}\label{e1}
w(\sigma)=\prod_{i=0}^k w_0(v_i).
\end{align}
 where $\sigma=[v_0,v_1,\dots,v_k]$ is a $k$-simplex of $K$. We call $w$ defined as such the \emph{product weighting}.
\end{defn}
\begin{prop}
\label{prop1}
Let $(X,w_0)$ be a weighted point cloud data. Let $w$ be the product weighting defined in Definition \ref{product}. Then the followings hold:
\begin{enumerate}
\item The restriction of $w$ to the vertices of $K$ is $w_0$.
\item For any $\sigma_1,\sigma_2\in K$, if $\sigma_1\subseteq\sigma_2$, then $w(\sigma_1)\mid w(\sigma_2)$.
\end{enumerate}
\end{prop}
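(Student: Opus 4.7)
The plan is straightforward: both parts follow almost directly from unpacking the defining formula \eqref{e1} of the product weighting.

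For part (1), I would observe that a vertex $v\in X$ corresponds to the $0$-simplex $[v]$. Applying the product formula with $k=0$ yields $w([v]) = \prod_{i=0}^{0} w_0(v_i) = w_0(v)$. This shows that $w$ restricted to the vertex set agrees with $w_0$, which is all that is required.

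For part (2), suppose $\sigma_1 \subseteq \sigma_2$, and write $\sigma_2 = [v_0, v_1, \ldots, v_k]$. Since $\sigma_1$ is a nonempty subset of $\sigma_2$, there is an index set $I \subseteq \{0, 1, \ldots, k\}$ such that $\sigma_1$ consists of the vertices $\{v_i : i \in I\}$. By the definition of the product weighting,
\[
w(\sigma_1) = \prod_{i\in I} w_0(v_i), \qquad w(\sigma_2) = \prod_{i=0}^{k} w_0(v_i).
\]
Let $x = \prod_{i \in \{0,\ldots,k\}\setminus I} w_0(v_i) \in R$ (taking $x = 1_R$ as the empty product if $I = \{0,\ldots,k\}$). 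Then $w(\sigma_1)\cdot x = w(\sigma_2)$, which by the definition of divisibility in $R$ gives $w(\sigma_1) \mid w(\sigma_2)$.

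There is no real obstacle here; both parts are immediate consequences of the formula \eqref{e1}, together with the fact that a product of elements in the commutative ring $R$ may be split into the product over a subset and the product over its complement. The only minor point to be careful about is the convention for the empty product (when $\sigma_1 = \sigma_2$), which is handled by setting it equal to $1_R$.
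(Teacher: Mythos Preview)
Your proof is correct and follows essentially the same approach as the paper: both parts are obtained by direct inspection of the product formula \eqref{e1}, with part~(2) amounting to factoring $w(\sigma_2)$ as $w(\sigma_1)$ times the product of $w_0$ over the remaining vertices. The only cosmetic difference is that the paper orders the vertices so that $\sigma_1=[v_0,\dots,v_k]$ and $\sigma_2=[v_0,\dots,v_k,\dots,v_l]$, whereas you use an arbitrary index set $I$; your version is slightly more explicit about the empty-product convention.
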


\begin{proof}
Firstly, if $\sigma=[v_0]$ is a vertex of $K$ (0-simplex), then $w(\sigma)=w_0(v_0)$ by (\ref{e1}).

For the second assertion, suppose $\sigma_1\subseteq\sigma_2$, where $\sigma_1=[v_0,\dots,v_k]$ and $\sigma_2=[v_0,\dots,v_k,\dots,v_l]$. Then $w(\sigma_2)=w(\sigma_1)\cdot\prod_{i=k+1}^l w_0(v_i)$.
\end{proof}

For commutative rings such that every two elements have a LCM (for instance UFDs), we can use the economical weighting in \cite[p.~231]{Dawson1990} instead, where the weight of any simplex is the LCM of the weights of its faces.


\section{Properties of Weighted Simplicial Complexes}
In this section, we prove some properties of weighted simplicial complexes. We consider the case where $R$ is a commutative ring with 1.
We now consider subcomplexes given by the preimage of the weight function with values in ideals. This may have the meaning to take out partial data according to the values of the weight function.
\begin{lemma}
\label{lemma2}
Let $I$ be an ideal of a commutative ring $R$. Let $(K,w)$ be a WSC, where $w: K\to R$ is a weight function. Let $w^{-1}(I)$ denote the preimage of $I$ under $w$. If $\sigma\in w^{-1}(I)$, then for all simplices $\tau$ containing $\sigma$, we have $\tau\in w^{-1}(I)$.
\end{lemma}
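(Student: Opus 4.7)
The plan is to unpack the definitions and combine the divisibility condition of a weighted simplicial complex with the absorption property of an ideal. Let $\sigma \in w^{-1}(I)$ and let $\tau \in K$ be any simplex containing $\sigma$, i.e.\ $\sigma \subseteq \tau$. By hypothesis $w(\sigma) \in I$, and we need to show $w(\tau) \in I$.

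First I would invoke Definition~\ref{wscdef}: since $(K,w)$ is a WSC and $\sigma \subseteq \tau$, we have the divisibility $w(\sigma) \mid w(\tau)$. By the definition of divisibility in $R$, this yields some $x \in R$ with $w(\sigma)\, x = w(\tau)$.

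Next I would apply the defining absorption property of the ideal $I \subseteq R$: because $w(\sigma) \in I$ and $x \in R$, we get $w(\sigma)\, x \in I$. Substituting the identity from the previous step gives $w(\tau) \in I$, which is exactly the statement $\tau \in w^{-1}(I)$.

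There is no real obstacle here; the lemma is essentially a one-line consequence of ``divisor of an element of $I$ times a ring element lies in $I$.'' The only thing to be slightly careful about is the case where $w(\sigma) = 0$, but this is harmless: if $w(\sigma)=0$ one reads the condition $w(\sigma)\mid w(\tau)$ as forcing $w(\tau)=0 \in I$ directly, so the argument goes through uniformly whether or not $w(\sigma)$ is a zero divisor or zero. I would present the proof as a two-sentence argument without introducing additional notation.
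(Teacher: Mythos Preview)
Your proof is correct and essentially identical to the paper's own argument: both invoke Definition~\ref{wscdef} to obtain $w(\sigma)\mid w(\tau)$, write $w(\tau)=w(\sigma)x$ for some $x\in R$, and then use the ideal absorption property to conclude $w(\tau)\in I$. Your extra remark about the $w(\sigma)=0$ case is not in the paper's proof and is unnecessary caution (and note that the paper's divisibility definition is stated only for nonzero $a$, so strictly speaking that edge case lives in the definition of WSC rather than in this lemma); the core argument is the same either way.
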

\begin{proof}
Let $\sigma\in w^{-1}(I)$, i.e.\ $w(\sigma)\in I$. By Definition \ref{wscdef}, for $\sigma\subseteq\tau$ we have $w(\sigma)\mid w(\tau)$. Hence $w(\tau)=w(\sigma)x$ for some $x\in R$. Since $I$ is an ideal, thus $w(\tau)\in I$.
\end{proof}
\begin{theorem}
\label{preimageideal}
Let $I$ be an ideal of a commutative ring $R$. Let $(K,w)$ be a WSC, where $w:K\to R$ is a weight function. Then $K\setminus w^{-1}(I)$ is a simplicial subcomplex of $K$.
\end{theorem}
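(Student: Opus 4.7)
The plan is to unpack the definition of a simplicial subcomplex and reduce the claim to the contrapositive of Lemma \ref{lemma2}. Recall that a subset $L \subseteq K$ is a simplicial subcomplex precisely when, for every $\sigma \in L$, every nonempty face $\tau \subseteq \sigma$ lies in $L$. So the goal is to take an arbitrary $\sigma \in K \setminus w^{-1}(I)$ and show that every nonempty $\tau \subseteq \sigma$ still satisfies $\tau \in K$ and $w(\tau) \notin I$.

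The first condition is immediate: $\tau \in K$ since $K$ is itself a simplicial complex and $\tau$ is a nonempty subset of $\sigma \in K$. For the second, I would argue by contradiction. Suppose $w(\tau) \in I$. Then $\tau \in w^{-1}(I)$, and because $\sigma$ is a simplex containing $\tau$, Lemma \ref{lemma2} applied with the roles of ``$\sigma$'' and ``$\tau$'' in that lemma played by our $\tau$ and $\sigma$ respectively yields $w(\sigma) \in I$, contradicting the hypothesis that $\sigma \in K \setminus w^{-1}(I)$. Hence $w(\tau) \notin I$, so $\tau \in K \setminus w^{-1}(I)$.

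There is not really a main obstacle here: the theorem is essentially the contrapositive reformulation of Lemma \ref{lemma2}, combined with the upward-divisibility axiom of Definition \ref{wscdef} that was already used to prove that lemma. The only thing worth stating carefully is the quantifier switch --- Lemma \ref{lemma2} says that $w^{-1}(I)$ is upward-closed under the face relation, and ``upward-closed'' is exactly the same as saying its complement inside $K$ is downward-closed, which is precisely the simplicial subcomplex condition. I would therefore keep the written proof to a few lines, citing Lemma \ref{lemma2} directly.
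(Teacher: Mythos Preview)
Your argument is correct and essentially identical to the paper's: both take an element of $K\setminus w^{-1}(I)$, consider an arbitrary face, and derive a contradiction from Lemma~\ref{lemma2} if that face were to land in $w^{-1}(I)$. The only cosmetic differences are that the paper explicitly disposes of the empty case first and reverses your naming convention for $\sigma$ and $\tau$.
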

\begin{proof}
If $K\setminus w^{-1}(I)=\emptyset$, then it is the empty subcomplex of $K$.

Otherwise, let $\tau\in K\setminus w^{-1}(I)$. Let $\sigma$ be a nonempty subset of $\tau$. Suppose to the contrary $\sigma\in w^{-1}(I)$. Then by Lemma \ref{lemma2}, we have $\tau\in w^{-1}(I)$, which is a contradiction. Hence $\sigma\in K\setminus w^{-1}(I)$, so we have proved that $K\setminus w^{-1}(I)$ is a simplicial complex.
\end{proof}
\begin{prop}
Let $I$, $J$ be ideals of a commutative ring $R$. Let $(K,w)$ be a WSC. Then
\begin{equation}
\label{intersectideal}
K\setminus w^{-1}(I\cap J)=(K\setminus w^{-1}(I))\cup(K\setminus w^{-1}(J))
\end{equation}
is a simplicial subcomplex of $K$.
\end{prop}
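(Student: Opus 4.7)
The plan is to split the statement into two independent claims: first the set-theoretic equality \eqref{intersectideal}, and second the fact that the resulting subset is a simplicial subcomplex of $K$.

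For the equality, I would simply observe that preimage commutes with intersection, so $w^{-1}(I\cap J)=w^{-1}(I)\cap w^{-1}(J)$, and then apply De Morgan's law to the complement inside $K$. This gives
\[
K\setminus w^{-1}(I\cap J)=K\setminus\bigl(w^{-1}(I)\cap w^{-1}(J)\bigr)=(K\setminus w^{-1}(I))\cup(K\setminus w^{-1}(J)).
\]
This step is purely formal and should be quick.

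For the subcomplex claim, the key observation is that $I\cap J$ is itself an ideal of the commutative ring $R$ (this is a standard fact: intersections of ideals are ideals, since $I\cap J$ is a subgroup closed under multiplication by elements of $R$). Once this is noted, I can invoke Theorem \ref{preimageideal} with the ideal $I\cap J$ in place of $I$, yielding that $K\setminus w^{-1}(I\cap J)$ is a simplicial subcomplex of $K$. Combined with the set-theoretic identity, this also shows that the union $(K\setminus w^{-1}(I))\cup(K\setminus w^{-1}(J))$ is a subcomplex, even though a union of two subcomplexes would actually be a subcomplex in general regardless.

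There is no real obstacle here; the proposition is essentially a corollary of Theorem \ref{preimageideal} combined with the elementary facts that preimages commute with intersections and that intersections of ideals are ideals. The only thing to be careful about is to verify the ideal property of $I\cap J$ explicitly (or cite it as standard), so that Theorem \ref{preimageideal} applies.
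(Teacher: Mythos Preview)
Your proposal is correct and essentially matches the paper's proof: the paper establishes the set equality by a direct element-wise biconditional chain (equivalent to your preimage/De~Morgan argument) and then gives both arguments for the subcomplex claim---first noting that the union of the two subcomplexes $K\setminus w^{-1}(I)$ and $K\setminus w^{-1}(J)$ (each a subcomplex by Theorem~\ref{preimageideal}) is again a subcomplex, and then mentioning as an alternative exactly your approach of applying Theorem~\ref{preimageideal} directly to the ideal $I\cap J$. The only difference is emphasis: you lead with the $I\cap J$ route and mention the union argument as an aside, while the paper does the reverse.
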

\begin{proof}
We have that 
\begin{align*}
\sigma\in K\setminus w^{-1}(I\cap J)&\iff w(\sigma)\notin I\cap J\\
&\iff w(\sigma)\notin I\ \text{or}\ w(\sigma)\notin J\\
&\iff \sigma\in(K\setminus w^{-1}(I))\cup(K\setminus w^{-1}(J)).
\end{align*}
Hence Equation \ref{intersectideal} holds.

Since $I$, $J$ are ideals, by Theorem \ref{preimageideal} both $K\setminus w^{-1}(I)$ and $K\setminus w^{-1}(J)$ are simplicial subcomplexes of $K$ and so is their union. Alternatively, we can apply Theorem \ref{preimageideal} to the ideal $I\cap J$ to conclude that $K\setminus w^{-1}(I\cap J)$ is a simplicial subcomplex of $K$.
\end{proof}
\subsection{Categorical Properties of WSC}
Let $K$ and $L$ be simplicial complexes. A map $f:K\to L$ is called a \emph{simplicial map} if it sends each simplex of $K$ to a simplex of $L$ by a linear map taking vertices to vertices. That is, if the vertices $v_0,\dots,v_n$ of $K$ span a simplex of $K$, the points $f(v_0),\dots,f(v_n)$ (not necessarily distinct) span a simplex of $L$.

Next, we will use some terminology from Category Theory. We recommend the book by Mac Lane \cite{Lane1978} for an introduction to the subject. The categorical properties of WSCs have been studied in \cite{Dawson1990}. Here, we mainly show that it easily generalizes to the case where weights lie in a ring, and write it in greater detail.

\begin{defn}[{\cite[p.~13]{Lane1978}}]
Let $C$ and $B$ be categories. A \emph{functor} $T:C\to B$ with domain $C$ and codomain $B$ consists of two suitably related functions: The object function $T$, which assigns to each object $c$ of $C$ an object $Tc$ of $B$ and the arrow function (also written as $T$) which assigns to each arrow $f:c\to c'$ of $C$ an arrow $Tf: Tc\to Tc'$ of $B$, such that \[T(1_c)=1_{Tc},\qquad T(g\circ f)=Tg\circ Tf,\] where the latter holds whenever the composite $g\circ f$ is defined in $C$.
\end{defn}

In \cite[p. 229]{Dawson1990}, morphisms of weighted simplicial complexes with integral weights   have been studied. In the following definition, we generalize \cite{Dawson1990} and define morphisms of weighted simplicial complexes with weights in general commutative rings.
\begin{defn}[cf.\ {\cite[p.~229]{Dawson1990}}]
\label{WSCmorphism}
Let $(K,w_K)$ and $(L,w_L)$ be WSCs. A \emph{morphism of WSCs} is a simplicial map $f: K\to L$ such that $w_L(f(\sigma))\mid w_K(\sigma)$ for all $\sigma\in K$. These form the morphisms of a category \textbf{WSC}. We may omit the subscripts in $w_K$ and $w_L$, for instance writing $w(f(\sigma))\mid w(\sigma)$, if there is no danger of confusion.
\end{defn}

The next example generalizes \cite[p. 229]{Dawson1990}. 
\begin{eg}[cf.\ {\cite[p.~229]{Dawson1990}}]
For any simplicial complex $K$ and every $a\in R$, there is a WSC $(K,a)$ in which every simplex (in particular every vertex) has weight $a$. We call this construction a \emph{constant weighting}.
\end{eg}
Let \textbf{SC} denote the category of simplicial complexes.
\begin{prop}[cf.\ {\cite[p.~229]{Dawson1990}}]
Constant weightings are functorial: Let $T:\textbf{SC}\to\textbf{WSC}$ be defined by $TK=(K,a)$ for each simplicial complex $K\in\textbf{SC}$ and $Tf=f$ for each simplicial map $f\in\textbf{SC}$. Then $T$ is a functor.
\end{prop}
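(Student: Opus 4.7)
The plan is to verify the four conditions that make $T$ a functor: (i) $T$ sends each object of $\textbf{SC}$ to a bona fide object of $\textbf{WSC}$, (ii) $T$ sends each morphism of $\textbf{SC}$ to a morphism of $\textbf{WSC}$, (iii) $T$ preserves identities, and (iv) $T$ preserves composition.

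For (i), I would check that $(K,a)$ with constant weight function $w \equiv a$ satisfies Definition~\ref{wscdef}. Given $\sigma_1 \subseteq \sigma_2$ in $K$, we need $w(\sigma_1) \mid w(\sigma_2)$, i.e.\ $a \mid a$; this holds because $a = a \cdot 1_R$. For (ii), let $f:K \to L$ be a simplicial map and consider $Tf = f : (K,a) \to (L,a)$. The condition to verify from Definition~\ref{WSCmorphism} is $w_L(f(\sigma)) \mid w_K(\sigma)$ for all $\sigma \in K$; since both sides equal $a$, this reduces again to $a \mid a$, which holds.

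For (iii) and (iv), the verification is essentially tautological since on the underlying set-maps $T$ is just the identity assignment $f \mapsto f$. Specifically, $T(1_K) = 1_K$, which is exactly the identity morphism of $(K,a) = TK$ in $\textbf{WSC}$ (noting that the identity simplicial map trivially satisfies the divisibility condition by (ii)). Similarly, for composable simplicial maps $f:K\to L$ and $g:L\to M$, the composite $g\circ f$ is a simplicial map $K \to M$ which also satisfies the divisibility condition, so $T(g\circ f) = g\circ f = Tg \circ Tf$.

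There is no real obstacle here: every nontrivial content of ``functor'' collapses to the single observation that $a \mid a$ in any commutative ring with $1$. The only thing requiring even mild care is ensuring that the categorical composition in $\textbf{WSC}$ is just composition of underlying simplicial maps, which is immediate from Definition~\ref{WSCmorphism} since the divisibility condition on morphisms is closed under composition: if $w(f(\sigma)) \mid w(\sigma)$ and $w(g(\tau)) \mid w(\tau)$ for all relevant simplices, then $w(g(f(\sigma))) \mid w(f(\sigma)) \mid w(\sigma)$, so divisibility propagates through composition in general (and is automatic in the constant-weight case).
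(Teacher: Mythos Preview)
Your proof is correct and follows the same approach as the paper, which simply notes that the verification is straightforward with the key point being that $a\mid a$ for all $a\in R$. You have written out the details more fully, but the substance is identical.
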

\begin{proof}
Straightforward verification. Note that the condition $w(f(\sigma))\mid w(\sigma)$ in Definition \ref{WSCmorphism} is trivially satisfied since $a\mid a$ for all $a\in R$.
\end{proof}
\begin{defn}[{\cite[p.~80]{Lane1978}}]
Let $A$ and $X$ be categories. An \emph{adjunction} from $X$ to $A$ is a triple $\langle F,G,\varphi\rangle$, where $F:X \to A$ and $G: A\to X$ are functors and $\varphi$ is a function which assigns to each pair of objects $x\in X$, $a\in A$ a bijection of sets \[\varphi=\varphi_{x,a}: A(Fx,a)\cong X(x,Ga)\] which is natural in $x$ and $a$.
An adjunction may also be described directly in terms of arrows. It is a bijection which assigns to each arrow $f: Fx\to a$ an arrow $\varphi f=\text{rad}\,f:x\to Ga$, the \emph{right adjunct} of $f$, such that \[\varphi(k\circ f)=Gk\circ\varphi f,\qquad \varphi(f\circ Fh)=\varphi f\circ h\] hold for all $f$ and all arrows $h: x'\to x$ and $k: a\to a'$. Given such an adjunction, the functor $F$ is said to be a \emph{left adjoint} for $G$, while $G$ is called a \emph{right adjoint} for $F$.
\end{defn}
One reason why we generalize the weights to take values in rings with 1 is to keep the following nice proposition true.
\begin{prop}[cf.\ {\cite[p.~229]{Dawson1990}}]
\label{prop:rladjoint}
The constant weighting functor $T_1:=(-,1_R)$ and $T_0:=(-,0_R)$ are respectively right and left adjoint to the forgetful functor $U$ from \textbf{WSC} to the category \textbf{SC} of simplicial complexes.
\end{prop}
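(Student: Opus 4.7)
The plan is to establish each adjunction by exhibiting a bijection on hom-sets that is literally the identity on underlying simplicial maps, so that the whole proof reduces to checking that this assignment is well-defined in each direction; naturality in both variables then holds on the nose. The two key observations are that $1_R$ divides every element of $R$ (since $1_R \cdot a = a$) and that every element of $R$ divides $0_R$ (since $a \cdot 0_R = 0_R$), which make the weight-compatibility condition in Definition \ref{WSCmorphism} automatically satisfied in the relevant hom-sets.

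For the adjunction $U \dashv T_1$, fix a WSC $(K, w)$ and a simplicial complex $L$, and define
\[ \varphi_{(K,w),L}: \textbf{SC}(U(K,w), L) \longrightarrow \textbf{WSC}((K,w), (L, 1_R)) \]
by $\varphi(f) = f$. Well-definedness follows because for every $\sigma \in K$, the condition $1_R \mid w(\sigma)$ holds trivially, so any simplicial map $f: K \to L$ is automatically a morphism of WSCs from $(K,w)$ to $T_1 L = (L, 1_R)$. The inverse simply forgets the weight-compatibility, which is trivially preserved.

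For the adjunction $T_0 \dashv U$, fix a simplicial complex $K$ and a WSC $(L, w_L)$, and define
\[ \psi_{K,(L,w_L)}: \textbf{WSC}((K, 0_R), (L, w_L)) \longrightarrow \textbf{SC}(K, L) \]
again by the identity on underlying simplicial maps. The inverse is well-defined because any simplicial map $g: K \to L$ satisfies $w_L(g(\sigma)) \mid 0_R$ for every $\sigma \in K$, so $g$ is automatically a WSC morphism from $T_0 K = (K, 0_R)$ to $(L, w_L)$. Naturality in both variables is then immediate: for composable arrows $h: (K',w') \to (K,w)$ in \textbf{WSC} and $k: L \to L'$ in \textbf{SC}, one has $\varphi(k \circ f \circ Uh) = k \circ f \circ Uh = T_1 k \circ \varphi(f) \circ h$ on the nose, and similarly for $\psi$, because $\varphi$ and $\psi$ are the identity on underlying simplicial maps and $U$, $T_1$, $T_0$ all act as the identity on morphisms.

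The main obstacle, and really the only subtle point, is the handling of $0_R$ under the paper's formal definition of divisibility, which is stated only for nonzero divisors; one must adopt the natural convention that $(K, 0_R)$ is a legitimate WSC (i.e.\ that $0_R \mid 0_R$) and that every element divides $0_R$. With this convention in place, every verification above is a one-line check.
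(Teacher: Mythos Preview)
Your proof is correct and follows essentially the same approach as the paper: both define the hom-set bijections as the identity on underlying simplicial maps, reduce the verification to the divisibility facts $1_R \mid w(\sigma)$ and $w(\sigma) \mid 0_R$, and then observe naturality is automatic since $U$, $T_0$, $T_1$ are the identity on morphisms. Your explicit remark on the $0_R$ divisibility convention is a welcome clarification, since the paper's stated definition of divisibility only applies to nonzero divisors.
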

\begin{proof}
Let $\varphi$ be a bijection that assigns to each arrow $f: U(K,w)\to L$ an arrow $\varphi f: (K,w)\to (L,1)$, where $\varphi f(\sigma)=f(\sigma)$. The key point is that the condition for WSC morphism (Def.\ \ref{WSCmorphism}), namely $1\mid w(\sigma)$, always holds for all $\sigma\in (K,w)$. Then for all arrows $h:(K,w)\to (K',w')$ and $k: L\to L'$, we have $\varphi(k\circ f)=k\circ f=Uk\circ\varphi f$ and $\varphi(f\circ Uh)=f\circ Uh=\varphi f\circ h$. Thus $T_1$ is the right adjoint for $U$.

Let $\psi$ be a bijection that assigns to each arrow $f': (K,0)\to (L,w')$ an arrow $\psi f': K\to U(L,w')$, where $\psi f'(\sigma)=f'(\sigma)$. The key point is that $w'(f'(\sigma))\mid 0$ always holds for all $\sigma\in (K,0)$. Similarly, we can conclude that $T_0$ is the left adjoint for $U$.
\end{proof}
\section{Homology of Weighted Simplicial Complexes}
\label{homology}
In this section, we let $R$ be an integral domain, in order to form the field of fractions (also known as quotient field) which is needed for our purposes. 
\subsection{Chain complex}
A \emph{chain complex} $(C_\bullet,\partial_\bullet)$ is a sequence of abelian groups or modules $\dots, C_2, C_1, C_0, C_{-1}, C_{-2},\dots$ connected by homomorphisms (called boundary homomorphisms) $\partial_n: C_n\to C_{n-1}$, such that $\partial_n\circ\partial_{n+1}=0$ for each $n$. A chain complex is usually written out as: \[\dots\to C_{n+1}\xrightarrow{\partial_{n+1}}C_n\xrightarrow{\partial_n}C_{n-1}\to\dots\to C_1\xrightarrow{\partial_1}C_0\xrightarrow{\partial_0}C_{-1}\xrightarrow{\partial_{-1}}C_{-2}\to\dots\]
A \emph{chain map} $f$ between two chain complexes $(A_\bullet, \partial_{A,\bullet})$ and $(B_\bullet, \partial_{B,\bullet})$ is a sequence $f_\bullet$ of module homomorphisms $f_n: A_n\to B_n$ for each $n$ that commutes with the boundary homomorphisms on the two chain complexes: \[\partial_{B, n}\circ f_n=f_{n-1}\circ \partial_{A,n}.\]
\[
\begin{tikzcd}
\dots\arrow[r] &A_{n+1}\arrow[d,"f_{n+1}"]\arrow[r,"\partial_{A,n+1}"] &A_{n}\arrow[d,"f_n"]\arrow[r,"\partial_{A,n}"] &A_{n-1}\arrow[d,"f_{n-1}"]\arrow[r] &\dots\\
\dots\arrow[r] &B_{n+1}\arrow[r,"\partial_{B,n+1}"] &B_n\arrow[r,"\partial_{B,n}"] &B_{n-1}\arrow[r]&\dots
\end{tikzcd}
\]
\subsection{Homology Groups}
For a topological space $X$ and a chain complex $C(X)$, the \emph{$n$th homology group} of $X$ is $H_n(X):=\ker(\partial_n)/\Ima(\partial_{n+1})$. Elements of $B_n(X):=\Ima(\partial_{n+1})$ are called \emph{boundaries} and elements of $Z_n(X):=\ker(\partial_n)$ are called \emph{cycles}.
\begin{prop}
\label{chainmapinduce}
A chain map $f_\bullet$ between chain complexes $(A_\bullet, \partial_{A, \bullet})$ and $(B_\bullet, \partial_{B,\bullet})$ induces homomorphisms between the homology groups of the two complexes.
\end{prop}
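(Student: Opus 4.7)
The plan is to exhibit an explicit induced map and verify it is well-defined and a homomorphism at each degree. Given a chain map $f_\bullet\colon (A_\bullet,\partial_{A,\bullet})\to (B_\bullet,\partial_{B,\bullet})$, I would define, for each $n$,
\[
(f_n)_*\colon H_n(A)\to H_n(B),\qquad [z]\mapsto [f_n(z)],
\]
where $[z]$ denotes the homology class of a cycle $z\in Z_n(A)$. The whole proof then reduces to checking that this assignment is unambiguous and respects the group structure.

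First I would verify that $f_n$ carries cycles to cycles: if $z\in Z_n(A)$, then using the chain-map identity $\partial_{B,n}\circ f_n=f_{n-1}\circ\partial_{A,n}$ one gets $\partial_{B,n}(f_n(z))=f_{n-1}(\partial_{A,n}(z))=f_{n-1}(0)=0$, so $f_n(z)\in Z_n(B)$. Next I would verify that $f_n$ carries boundaries to boundaries: if $z=\partial_{A,n+1}(a)$ for some $a\in A_{n+1}$, then the chain-map identity in degree $n+1$ gives
\[
f_n(z)=f_n(\partial_{A,n+1}(a))=\partial_{B,n+1}(f_{n+1}(a))\in B_n(B).
\]
These two facts together show that if $z,z'\in Z_n(A)$ represent the same class (i.e.\ $z-z'\in B_n(A)$), then $f_n(z)-f_n(z')=f_n(z-z')\in B_n(B)$, so $[f_n(z)]=[f_n(z')]$ in $H_n(B)$. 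Hence $(f_n)_*$ is well-defined.

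Finally, $(f_n)_*$ is a homomorphism because $f_n$ itself is a module homomorphism: for cycles $z,z'\in Z_n(A)$ and scalars $r$,
\[
(f_n)_*([z]+r[z'])=[f_n(z+rz')]=[f_n(z)+rf_n(z')]=[f_n(z)]+r[f_n(z')].
\]
There is no real obstacle here: the only subtle point is the verification that boundaries go to boundaries, which is exactly where the chain-map square is used (in degree $n+1$ rather than $n$). I would present the two commutative-square computations as the heart of the argument and state the remaining bookkeeping (well-definedness of the quotient map, additivity) as immediate consequences.
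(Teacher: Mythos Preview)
Your proof is correct and follows essentially the same approach as the paper: both verify that $f$ sends cycles to cycles and boundaries to boundaries using the chain-map identity, and then obtain the induced map on homology. The only cosmetic difference is that the paper packages the well-definedness step via the universal property of quotient groups, whereas you check it directly; these are equivalent formulations of the same argument.
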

\begin{proof}
The relation $\partial f=f\partial$ implies that $f$ takes cycles to cycles since $\partial\alpha=0$ implies $\partial(f\alpha)=f(\partial\alpha)=0$. Also $f$ takes boundaries to boundaries since $f(\partial\beta)=\partial(f\beta)$.

For $\beta\in\Ima\partial_{A,n+1}$, we have $\pi_{B,n}f_n(\beta)=\Ima\partial_{B,n+1}$. Therefore $\Ima\partial_{A,n+1}\subseteq\ker(\pi_{B,n}\circ f_n)$. By the universal property of quotient groups, there exists a unique homomorphism $(f_n)_*$ such that the following diagram commutes.
\[
\begin{tikzcd}[column sep=tiny]
\ker\partial_{A,n}\arrow[r,"f_n"]\arrow[rd,"\pi_{A,n}"] &\ker\partial_{B,n}\arrow[r,"\pi_{B,n}"] &H_n(B_\bullet)=\ker\partial_{B,n}/\Ima\partial_{B,n+1}\\
&H_n(A_\bullet)=\ker\partial_{A,n}/\Ima\partial_{A,n+1}\arrow[ur,"(f_n)_*"]
\end{tikzcd}
\]

Hence $f_\bullet$ induces a homomorphism $(f_\bullet)_*: H_\bullet (A_\bullet)\to H_\bullet (B_\bullet)$.
\end{proof}
\begin{defn}
\label{chaingroup}
Let $C_n(K,w)$ (or simply $C_n(K)$ where unambiguous) be the free $R$-module with basis the $n$-simplices of $K$ with nonzero weight. Elements of $C_n(K)$, called $n$\emph{-chains}, are finite formal sums $\sum_\alpha n_\alpha\sigma_\alpha$ with coefficients $n_\alpha\in R$ and $\sigma_\alpha\in K$.
\end{defn}
\begin{defn}
\label{chain}
Given a simplicial map $f: K\to L$, the induced homomorphism $f_\sharp: C_n(K)\to C_n(L)$ is defined on the generators of $C_n(K)$ (and extended linearly) as follows. For $\sigma=[v_0,v_1,\dots,v_n]\in C_n(K)$, we define
\begin{equation}f_\sharp(\sigma)=\begin{cases}\frac{w(\sigma)}{w(f(\sigma))}f(\sigma)&\text{if $f(v_0),\dots,f(v_n)$ are distinct,}\\
0&\text{otherwise,}
\end{cases}
\end{equation}
where $\frac{w(\sigma)}{w(f(\sigma))}\in S^{-1}R$ is identified with the corresponding element in $R$ as described in Remark \ref{identifyfrac}.

Note that this is well-defined since if $w(\sigma)\neq 0$, then $w(f(\sigma))\mid w(\sigma)$ in Definition \ref{WSCmorphism} implies $w(f(\sigma))\neq 0$. So $\frac{w(\sigma)}{w(f(\sigma))}\in S^{-1}R$. Furthermore, $\frac{w(\sigma)}{w(f(\sigma))}=\frac{xw(f(\sigma))}{w(f(\sigma))}$ for some $x\in R$, so that $\frac{w(\sigma)}{w(f(\sigma))}=x\in R$.
\end{defn}
\begin{defn}[cf.\ {\cite[p.~234]{Dawson1990}}]
\label{boundary}
The \emph{weighted boundary map} $\partial_n: C_n(K)\to C_{n-1}(K)$ is the map: \[\partial_n(\sigma)=\sum_{i=0}^n\frac{w(\sigma)}{w(d_i(\sigma))}(-1)^id_i(\sigma)\] where the \emph{face maps} $d_i$ are defined as: \[d_i(\sigma)=[v_0,\dots,\widehat{v_i},\dots,v_n]\qquad\text{(deleting the vertex $v_i$)}\]  for any $n$-simplex $\sigma=[v_0,\dots,v_n]$. 

Again, if $w(\sigma)\neq 0$, then $w(d_i(\sigma))\neq 0$ so $\partial_n$ is well-defined. Similarly, we identify $\frac{w(\sigma)}{w(d_i(\sigma))}\in S^{-1}R$ with the corresponding element in $R$ as described in Remark \ref{identifyfrac}.
\end{defn}

Next we show that after generalization to weights in an integral domain, the relation $\partial^2=0$ (\cite[p.~234]{Dawson1990}) of the weighted boundary map remains true. 

\begin{prop}[cf.\ {\cite[p.~234]{Dawson1990}}]
$\partial^2=0$. To be precise, the composition $C_n(K)\xrightarrow{\partial_n}C_{n-1}(K)\xrightarrow{\partial_{n-1}}C_{n-2}(K)$ is the zero map.
\end{prop}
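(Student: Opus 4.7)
The plan is to expand $\partial_{n-1}(\partial_n \sigma)$ as a double sum, then pair up the terms indexed by $(i,j)$ so that the standard face-map identities cause cancellation, observing along the way that the weight coefficients depend only on the resulting codimension-two face and not on the order in which vertices are deleted.

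First, I would verify that the weighted ratios appearing are genuinely elements of $R$. For any simplex $\sigma$ with $w(\sigma)\neq 0$ and any face $\rho\subseteq\sigma$, the divisibility $w(\rho)\mid w(\sigma)$ combined with the fact that $R$ is an integral domain forces $w(\rho)\neq 0$, so $w(\sigma)/w(\rho)\in S^{-1}R$ is defined and equals a unique element of $R$ via Remark~\ref{identifyfrac}. In particular the telescoping identity
\[
\frac{w(\sigma)}{w(d_i\sigma)}\cdot\frac{w(d_i\sigma)}{w(d_j d_i\sigma)}=\frac{w(\sigma)}{w(d_j d_i\sigma)}
\]
holds in $S^{-1}R$ and descends to $R$.

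Next, I would apply $\partial_{n-1}$ to each term of $\partial_n\sigma$ and use the telescoping identity above to obtain
\[
\partial_{n-1}\partial_n\sigma=\sum_{i=0}^{n}\sum_{j=0}^{n-1}(-1)^{i+j}\frac{w(\sigma)}{w(d_j d_i\sigma)}\,d_j d_i\sigma.
\]
I would then split the sum according to $j<i$ and $j\ge i$, and apply the standard simplicial identity $d_j d_i = d_{i-1}d_j$ for $j<i$ (equivalently, $d_j d_i\sigma = d_i d_{j+1}\sigma$ when $j\ge i$). For every pair of indices $0\le a<b\le n$, the face $[v_0,\dots,\widehat{v}_a,\dots,\widehat{v}_b,\dots,v_n]$ arises exactly twice: once as $d_a d_b\sigma$ from the region $j<i$ with sign $(-1)^{a+b}$, and once as $d_{b-1}d_a\sigma$ from the region $j\ge i$ (with $i=a$, $j=b-1$) with sign $(-1)^{a+b-1}$. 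Since the weight coefficient $w(\sigma)/w(\text{face})$ depends only on the face, the two contributions are negatives of each other and cancel.

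The main obstacle is nothing more than bookkeeping: the combinatorial skeleton is identical to the classical proof that $\partial^2=0$, and the only genuinely new content is the observation that the weight factor $w(\sigma)/w(d_j d_i\sigma)$ is symmetric in the two deletions (it does not remember whether $v_i$ or $v_j$ was removed first). Once the weight ratios are shown to live in $R$ and to telescope, the sign cancellation from the unweighted case carries through verbatim, giving $\partial_{n-1}\partial_n=0$.
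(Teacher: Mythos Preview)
Your proof is correct and follows essentially the same approach as the paper: expand $\partial_{n-1}\partial_n\sigma$ as a double sum, telescope the weight ratios so that the coefficient depends only on the codimension-two face, split into two ranges of indices, and observe that the two halves cancel by the usual sign argument. The paper writes out the faces explicitly as $[v_0,\dots,\widehat{v_j},\dots,\widehat{v_i},\dots,v_n]$ rather than invoking the identity $d_j d_i = d_{i-1}d_j$, but the content is identical.
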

\begin{proof}
Let $\sigma=[v_0,\dots,v_n]$ be a $n$-simplex. We have \[\partial_n(\sigma)=\sum_{i=0}^n\frac{w(\sigma)}{w([v_0,\dots,\widehat{v_i},\dots,v_n])}(-1)^i[v_0,\dots,\widehat{v_i},\dots,v_n].\] Hence
\begin{align*}
&\partial_{n-1}\partial_n(\sigma)\\
&=\sum_{j<i}\frac{w(\sigma)}{w(d_i(\sigma))}(-1)^i\frac{w(d_i(\sigma))}{w([v_0,\dots,\widehat{v_j},\dots,\widehat{v_i},\dots,v_n])}(-1)^j[v_0,\dots,\widehat{v_j},\dots,\widehat{v_i},\dots,v_n]\\
&\quad+\sum_{j>i}\frac{w(\sigma)}{w(d_i(\sigma))}(-1)^i\frac{w(d_i(\sigma))}{w([v_0,\dots,\widehat{v_i},\dots,\widehat{v_j},\dots,v_n])}(-1)^{j-1}[v_0,\dots,\widehat{v_i},\dots,\widehat{v_j},\dots,v_n]\\
&=\sum_{j<i}\frac{w(\sigma)}{w([v_0,\dots,\widehat{v_j},\dots,\widehat{v_i},\dots,v_n])}(-1)^{i+j}[v_0,\dots,\widehat{v_j},\dots,\widehat{v_i},\dots,v_n]\\
&\quad+\sum_{j>i}\frac{w(\sigma)}{w([v_0,\dots,\widehat{v_i},\dots,\widehat{v_j},\dots,v_n])}(-1)^{i+j-1}[v_0,\dots,\widehat{v_i},\dots,\widehat{v_j},\dots,v_n]\\
&=0.
\end{align*}
The latter two summations cancel since after switching $i$ and $j$ in the second sum, it becomes the additive inverse of the first.
\end{proof}
\begin{lemma}
\label{fdcommute}
Let $f: K\to L$ be a simplicial map and $d_i$ be the $i$th face map. Then 
\begin{equation}d_i(f(\sigma))=f(d_i(\sigma))
\end{equation}
for all $\sigma=[v_0,v_1,\dots,v_n]\in K$ with $f(v_0),\dots,f(v_n)$ are distinct.
\end{lemma}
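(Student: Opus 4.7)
The plan is a direct unwinding of the definitions of a simplicial map and of the face map $d_i$, together with the hypothesis that $f(v_0),\dots,f(v_n)$ are distinct. Since the equation to be verified lives in $C_{n-1}(L)$ (or at the level of oriented simplices), the real content is to check that the two prescriptions yield the same ordered tuple of vertices.

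First I would spell out each side. By the definition of a simplicial map, $f$ sends the simplex $\sigma=[v_0,\dots,v_n]$ to the ordered tuple $[f(v_0),\dots,f(v_n)]$; the hypothesis that the images $f(v_0),\dots,f(v_n)$ are distinct guarantees that this is a genuine oriented $n$-simplex of $L$ (no vertex repetition forcing the expression to collapse). Applying the face map $d_i$ then deletes the $i$th entry, giving
\[
d_i(f(\sigma))=[f(v_0),\dots,\widehat{f(v_i)},\dots,f(v_n)].
\]

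Next I would compute the other side. Applying $d_i$ to $\sigma$ first produces $d_i(\sigma)=[v_0,\dots,\widehat{v_i},\dots,v_n]$, and then pushing this forward under the simplicial map $f$ yields
\[
f(d_i(\sigma))=[f(v_0),\dots,\widehat{f(v_i)},\dots,f(v_n)].
\]
The $n$ remaining images $f(v_0),\dots,\widehat{f(v_i)},\dots,f(v_n)$ are distinct since they are a sub-tuple of the distinct tuple $f(v_0),\dots,f(v_n)$, so this is again a well-defined oriented $(n-1)$-simplex of $L$.

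Comparing the two resulting ordered tuples vertex by vertex gives the required equality. There is no real obstacle here; the only subtle point is the role of the distinctness hypothesis, which is needed precisely to ensure that $f(\sigma)$ does not collapse and that the ordering of $d_i(f(\sigma))$ really is obtained by omitting the $i$th vertex in the same position as in $f(d_i(\sigma))$.
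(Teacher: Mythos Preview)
Your argument is correct and is essentially identical to the paper's own proof: both simply expand $d_i(f(\sigma))$ and $f(d_i(\sigma))$ using the definitions of $f$ and $d_i$ and observe that each equals $[f(v_0),\dots,\widehat{f(v_i)},\dots,f(v_n)]$. Your additional remarks on why the distinctness hypothesis is needed are fine but go slightly beyond what the paper writes out.
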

\begin{proof}
Let $\sigma=[v_0,\dots,v_n]$. Then we have
\begin{align*}
d_i(f(\sigma))&=d_i[f(v_0),\dots,f(v_n)]\\
&=[f(v_0),\dots,\widehat{f(v_i)},\dots,f(v_n)]\\
&=f([v_0,\dots,\widehat{v_i},\dots,v_n])\\
&=f(d_i(\sigma)).
\end{align*}
\end{proof}
\begin{prop}
Let $f:K\to L$ be a simplicial map. Then $f_\sharp\partial=\partial f_\sharp$.
\end{prop}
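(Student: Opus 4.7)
The plan is to verify $(f_\sharp \partial)(\sigma) = (\partial f_\sharp)(\sigma)$ on a generator $\sigma = [v_0, \ldots, v_n] \in C_n(K)$ by splitting into cases based on how $f$ acts on the vertices of $\sigma$.

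\textbf{Case 1: $f(v_0), \ldots, f(v_n)$ are all distinct.} Expanding $f_\sharp \partial(\sigma)$, each face $d_i(\sigma)$ again has distinct images under $f$, so Definition~\ref{chain} gives
\[
f_\sharp \partial(\sigma) = \sum_{i=0}^n (-1)^i \frac{w(\sigma)}{w(d_i\sigma)} \cdot \frac{w(d_i\sigma)}{w(f(d_i\sigma))} f(d_i\sigma) = \sum_{i=0}^n (-1)^i \frac{w(\sigma)}{w(f(d_i\sigma))} f(d_i\sigma).
\]
On the other side,
\[
\partial f_\sharp(\sigma) = \frac{w(\sigma)}{w(f(\sigma))} \sum_{i=0}^n (-1)^i \frac{w(f(\sigma))}{w(d_i f(\sigma))} d_i f(\sigma) = \sum_{i=0}^n (-1)^i \frac{w(\sigma)}{w(d_i f(\sigma))} d_i f(\sigma).
\]
Now Lemma~\ref{fdcommute} says $d_i f(\sigma) = f(d_i\sigma)$, so the two sums agree term-by-term. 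The weight cancellations work because $R$ is an integral domain, allowing us to pass through $S^{-1}R$ and identify the result with an element of $R$ via Remark~\ref{identifyfrac}.

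\textbf{Case 2: Exactly one pair of images coincides,} say $f(v_j) = f(v_k)$ with $j < k$. Then $f_\sharp(\sigma) = 0$, so $\partial f_\sharp(\sigma) = 0$. For $f_\sharp \partial(\sigma)$, every term $f_\sharp(d_i\sigma)$ with $i \neq j, k$ vanishes, since $f(d_i\sigma)$ still has the repeated vertex. Only the terms $i = j$ and $i = k$ survive. The oriented simplices $f(d_j\sigma)$ and $f(d_k\sigma)$ represent the same underlying subset of $L$ (so they share the same weight), but differ in orientation: the common vertex appears in position $k-1$ in $f(d_j\sigma)$ and in position $j$ in $f(d_k\sigma)$, so $f(d_k\sigma) = (-1)^{k-1-j} f(d_j\sigma)$. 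Substituting,
\[
f_\sharp \partial(\sigma) = \frac{w(\sigma)}{w(f(d_j\sigma))} \bigl[ (-1)^j + (-1)^k (-1)^{k-1-j} \bigr] f(d_j\sigma) = 0.
\]

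\textbf{Case 3: More than one coincidence among $f(v_0), \ldots, f(v_n)$.} Every face $d_i\sigma$ still has a repeated image vertex, so $f_\sharp(d_i\sigma) = 0$ for all $i$, giving $f_\sharp \partial(\sigma) = 0$; and $f_\sharp(\sigma) = 0$ gives $\partial f_\sharp(\sigma) = 0$.

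The main obstacle is bookkeeping in Case~2: tracking the sign $(-1)^{k-1-j}$ coming from the reordering of the doubled vertex, and confirming that the weight $w(f(d_j\sigma)) = w(f(d_k\sigma))$ since weights depend on simplices as sets rather than as oriented tuples. Once this sign is correctly identified, the two surviving terms cancel exactly as in the classical unweighted proof.
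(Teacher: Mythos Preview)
Your proof is correct and follows essentially the same three-case strategy as the paper, partitioning by the number of coincidences among $f(v_0),\dots,f(v_n)$. The only cosmetic difference is that in the ``exactly one coincidence'' case the paper assumes without loss of generality that $f(v_0)=f(v_1)$ (so the two surviving terms visibly cancel), whereas you keep the general indices $j<k$ and track the reordering sign $(-1)^{k-1-j}$ explicitly; both arguments are equivalent.
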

\begin{proof}
Let $\sigma=[v_0,\dots,v_n]\in C_n(K)$. Let $\tau$ be the simplex of $L$ spanned by $f(v_0),\dots,f(v_n)$. We consider three cases.
\begin{description}
\item[Case 1. $\dim \tau=n$]
In this case, the vertices $f(v_0),\dots,f(v_n)$ are distinct. We have
\begin{align*}
f_\sharp\partial(\sigma)&=f_\sharp\left(\sum_{i=0}^n\frac{w(\sigma)}{w(d_i(\sigma))}(-1)^id_i(\sigma)\right)\\
&=\sum_{i=0}^n\frac{w(\sigma)}{w(d_i(\sigma))}(-1)^i f_\sharp(d_i(\sigma))\\
&=\sum_{i=0}^n\frac{w(\sigma)}{w(d_i(\sigma))}(-1)^i\frac{w(d_i(\sigma))}{w(f(d_i(\sigma)))}f(d_i(\sigma))\\
&=\sum_{i=0}^n\frac{w(\sigma)}{w(f(d_i(\sigma)))}(-1)^if(d_i(\sigma)).
\end{align*}

On the other hand, we have
\begin{align*}
\partial f_\sharp(\sigma)&=\partial\left(\frac{w(\sigma)}{w(f(\sigma))}f(\sigma)\right)\\
&=\sum_{i=0}^n\frac{w(\sigma)}{w(f(\sigma))}\cdot\frac{w(f(\sigma))}{w(d_i(f(\sigma)))}(-1)^id_i(f(\sigma))\\
&=\sum_{i=0}^n\frac{w(\sigma)}{w(d_i(f(\sigma)))}(-1)^id_i(f(\sigma))\\
&=f_\sharp\partial(\sigma)
\end{align*}
since $d_i(f(\sigma))=f(d_i(\sigma))$ by Lemma \ref{fdcommute}.
\item[Case 2. $\dim\tau\leq n-2$]
In this case, $f_\sharp(d_i(\sigma))=0$ for all $i$, since at least two of the points $f(v_0),\dots,f(v_{i-1}),f(v_{i+1}),\dots,f(v_n)$ are the same. Thus $f_\sharp\partial(\sigma)$ vanishes. Note that $\partial f_\sharp(\sigma)$ also vanishes since $f_\sharp(\sigma)=0$, because $f(v_0),\dots f(v_n)$ are not distinct.
\item[Case 3. $\dim\tau=n-1$] WLOG we may assume that the vertices are ordered such that $f(v_0)=f(v_1)$, and $f(v_1),\dots,f(v_n)$ are distinct. Then $\partial f_\sharp(\sigma)$ vanishes. Now, \[f_\sharp\partial(\sigma)=\sum_{i=0}^n\frac{w(\sigma)}{w(d_i(\sigma))}(-1)^i f_\sharp(d_i(\sigma))\] has only two nonzero terms which sum up to
\begin{align*}
&\frac{w(\sigma)}{w(d_0(\sigma))}\cdot\frac{w(d_0(\sigma))}{w(f(d_0(\sigma)))}f(d_0(\sigma))-\frac{w(\sigma)}{w(d_1(\sigma))}\cdot\frac{w(d_1(\sigma))}{w(f(d_1(\sigma)))}f(d_1(\sigma))\\
=&\frac{w(\sigma)}{w(f(d_0(\sigma)))}f(d_0(\sigma))-\frac{w(\sigma)}{w(f(d_1(\sigma)))}f(d_1(\sigma)).
\end{align*}
Since $f(v_0)=f(v_1)$, we have $f(d_0(\sigma))=f(d_1(\sigma))$ and hence the two terms cancel each other as desired.
\end{description}
\end{proof}
\begin{defn}
We define the weighted homology group
\begin{equation}
H_n(K,w):=\ker(\partial_n)/\Ima(\partial_{n+1}),
\end{equation}
where $\partial_n$ is the weighted boundary map defined in Definition \ref{boundary}.
\end{defn}
Since the maps $f_\sharp:C_n(K,w_K)\to C_n(L,w_L)$ satisfy $f_\sharp\partial=\partial f_\sharp$, the $f_\sharp$'s define a chain map from the chain complex of $(K,w_K)$ to that of $(L,w_L)$. By Proposition \ref{chainmapinduce}, $f_\sharp$ induces a homomorphism $f_*: H_n(K,w_K)\to H_n(L,w_L)$. We may then view the map $(K,w_K)\mapsto H_n(K,w_K)$ as a functor $H_n: \textbf{WSC}\to \textbf{R-Mod}$ from the category of weighted simplicial complexes (\textbf{WSC}) to the category of $R$-modules (\textbf{R-Mod}).
\subsection{Calculation of Homology Groups in WSC}
The homology functor we define is different from the standard simplicial homology functor. For instance, it is possible for $H_0$ of a weighted simplicial complex to have torsion when the coefficient ring is $\mathbb{Z}$, as shown in \cite[p.~237]{Dawson1990}. We illustrate this more generally in the following example.
\begin{figure}[htbp]
\begin{center}
\begin{tikzpicture}[scale=0.8]
\draw (4.35,2.56) node[anchor=north west] {$x$};
\draw (6.62,4.72) node[anchor=north west] {$y$};
\draw (9.0,2.56) node[anchor=north west] {$z$};
\draw (5.,2.)-- (7.,4.);
\draw (9.,2.)-- (7.,4.);
\begin{scriptsize}
\draw [fill=black] (5.,2.) circle (2.5pt);
\draw [fill=black] (7.,4.) circle (2.5pt);
\draw [fill=black] (9.,2.) circle (2.5pt);
\end{scriptsize}
\end{tikzpicture}
\caption{Simplicial complex with 3 vertices $x$, $y$, $z$.}
\label{torsioneg}
\end{center}
\end{figure}
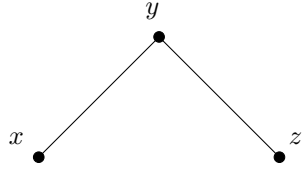
\begin{eg}[cf.\ {\cite[p.~237]{Dawson1990}}]
Let $R=\mathbb{Z}$. Consider $(K,w)$, where $w$ is the product weighting, to be the WSC shown in Figure \ref{torsioneg}, with $w(x)=1$, $w(y)=n$ and $w(z)=1$, where $n\in\mathbb{Z}$, $n\geq 2$. Then
\begin{align*}
\partial_1([x,y])&=\frac{w([x,y])}{w(y)}y-\frac{w([x,y])}{w(x)}x\\
&=\frac{n}{n}y-\frac{n}{1}x\\
&=y-nx.
\end{align*}

Similarly, $\partial_1([y,z])=nz-y$. Thus
\begin{align*}
H_0(K,w)&=\ker\partial_0/\Ima\partial_1\\
&\cong\langle x,y,z\mid nx=y, y=nz\rangle\\
&\cong\langle x,z\mid nx=nz\rangle\\
&\cong\mathbb{Z}\oplus\mathbb{Z}_n.
\end{align*}
\end{eg}
\begin{prop}[{cf.\ \cite[p.~239]{Dawson1990}}]
For the constant weighting $(K,a)$, $a\in R\setminus\{0\}$, the weighted homology functor is the same as the standard simplicial homology functor.
\end{prop}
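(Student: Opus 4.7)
The plan is to verify that, under the constant weighting $(K,a)$ with $a \neq 0$, every ingredient in the construction of the weighted homology functor collapses to the corresponding ingredient of the standard simplicial homology functor. This is essentially a term-by-term computation, and the only subtle point is the identification $a/a = 1_R$ in the quotient field via Remark \ref{identifyfrac}.

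First, I would check the chain modules. By Definition \ref{chaingroup}, $C_n(K,a)$ is the free $R$-module on the $n$-simplices of $K$ with nonzero weight. Since $w(\sigma) = a \neq 0$ for every $\sigma \in K$, every simplex qualifies, so $C_n(K,a)$ coincides with the standard simplicial chain module $C_n(K)$.

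Second, I would compute the weighted boundary map. For an $n$-simplex $\sigma = [v_0,\dots,v_n]$, Definition \ref{boundary} gives
\[
\partial_n(\sigma) = \sum_{i=0}^n \frac{w(\sigma)}{w(d_i(\sigma))}(-1)^i d_i(\sigma) = \sum_{i=0}^n \frac{a}{a}(-1)^i d_i(\sigma).
\]
Since $a \in S$ (the set of nonzero elements of $R$), the fraction $a/a \in S^{-1}R$ is identified with $1_R \in R$ in the sense of Remark \ref{identifyfrac}. Thus $\partial_n(\sigma) = \sum_{i=0}^n (-1)^i d_i(\sigma)$, which is precisely the standard simplicial boundary operator.

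Third, I would check the induced chain map of a simplicial map $f \colon (K,a) \to (L,a)$. By Definition \ref{chain}, for $\sigma = [v_0,\dots,v_n]$ with $f(v_0),\dots,f(v_n)$ distinct,
\[
f_\sharp(\sigma) = \frac{w(\sigma)}{w(f(\sigma))} f(\sigma) = \frac{a}{a}f(\sigma) = f(\sigma),
\]
and $f_\sharp(\sigma) = 0$ otherwise. This is exactly the standard simplicial induced chain map. Consequently, the chain complex, boundary maps, and induced chain maps all agree with the unweighted ones, so the homology modules $H_n(K,a) = \ker \partial_n / \Ima \partial_{n+1}$ and the induced maps $f_* \colon H_n(K,a) \to H_n(L,a)$ coincide with their standard simplicial counterparts. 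Hence the two functors are equal. The only step requiring any care is invoking Remark \ref{identifyfrac} correctly to replace $a/a$ by $1_R$; beyond that, the argument is a direct unpacking of the definitions.
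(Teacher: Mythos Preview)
Your proof is correct and follows essentially the same approach as the paper's own proof, which simply notes that the chain maps of Definition~\ref{chain} and the weighted boundary maps of Definition~\ref{boundary} reduce to the standard ones when all weights equal $a$. Your argument is just a more detailed unpacking of that observation, including the explicit verification that $C_n(K,a)$ has the full simplex basis and the careful use of Remark~\ref{identifyfrac} to identify $a/a$ with $1_R$.
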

\begin{proof}
If every simplex has weight $a\in R\setminus\{0\}$, note that the chain maps in Definition \ref{chain} and the weighted boundary maps in Definition \ref{boundary} reduce to the usual ones in standard simplicial homology. Hence the resulting weighted homology functor reduces to the standard one.
\end{proof}
\section{Weighted Persistent Homology}
After defining weighted homology, we proceed to define weighted persistent homology, following the example of the seminal paper by Zomorodian and Carlsson \cite{Zomorodian2005}. First, we give a review of persistence \cite{Zomorodian2005,Ghrist2008}, with generalizations to the weighted case.
\subsection{Persistence}
\begin{defn}
A \emph{weighted filtered complex} is an increasing sequence of weighted simplicial complexes $(\mathcal{K},w)=\{(K^i,w)\}_{i\geq 0}$, such that $K^i\subseteq K^{i+1}$ for all integers $i\geq 0$. (The weighting on $K^i$ is a restriction of that on $K^j$ for $i<j$.)
\end{defn}
Given a weighted filtered complex, for the $i$th complex $K^i$ we define the associated weighted boundary maps $\partial_k^i$ and groups $C_k^i, Z_k^i, B_k^i, H_k^i$ for all integers $i,k\geq 0$, following our development in Section \ref{homology}.
\begin{defn}
The \emph{weighted boundary map} $\partial_k^i: C_k(K^i)\to C_{k-1}(K^i)$ is the map $\partial_k: C_k(K^i)\to C_{k-1}(K^i)$ as defined in Definition \ref{boundary}. The \emph{weighted chain group} $C_k^i$ is the group $C_k(K^i,w)$ in Definition \ref{chaingroup}. The \emph{weighted cycle group} $Z_k^i$ is the group $\ker (\partial_k^i)$, while the \emph{weighted boundary group} $B_k^i$ is the group $\Ima(\partial_{k+1}^i)$. The \emph{weighted homology group} $H_k^i$ is the quotient group $Z_k^i/B_k^i$. (If the context is clear, we may omit the adjective ``weighted''.)
\end{defn}
\begin{defn}[{cf.\ \cite[p.~6]{Zomorodian2005}}]
The weighted \emph{$p$-persistent $k$th homology group} of $(\mathcal{K},w)=\{(K^i,w)\}_{i\geq 0}$ is defined as
\begin{equation}
H_k^{i,p}(\mathcal{K},w):=Z_k^i/(B_k^{i+p}\cap Z_k^i).
\end{equation}
If the coefficient ring $R$ is a PID and all the $K^i$ are finite simplicial complexes, then $H_k^{i,p}$ is a finitely generated module over a PID. We can then define the \emph{$p$-persistent $k$th Betti number} of $(K^i,w)$, denoted by $\beta_k^{i,p}$, to be the rank of the free submodule of $H_k^{i,p}$. This is well-defined by the structure theorem for finitely generated modules over a PID.
\end{defn}
Consider the homomorphism $\eta_k^{i,p}: H_k^i\to H_k^{i+p}$ that maps a homology class into the one that contains it. To be precise, 
\begin{equation}
n_k^{i,p}(\alpha+B_k^i)=\alpha+B_k^{i+p}.
\end{equation}
The homomorphism $\eta_k^{i,p}$ is well-defined since if $\alpha_1+B_k^i=\alpha_2+B_k^i$, then $\alpha_1-\alpha_2\in B_k^i\subseteq B_k^{i+p}$.
We prove that similar to the unweighted case (cf.\ \cite{Edelsbrunner2002,Zomorodian2005,Zomorodi2005}) we have $\Ima \eta_k^{i,p}\cong H_k^{i,p}$.
\begin{prop}[cf.\ {\cite[p.~6]{Zomorodian2005}}]
$\Ima \eta_k^{i,p}\cong H_k^{i,p}$.
\end{prop}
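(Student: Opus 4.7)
The plan is to prove the isomorphism via a direct application of the first isomorphism theorem to a well-chosen map out of $Z_k^i$. Since the filtration satisfies $K^i \subseteq K^{i+p}$, the chain groups embed $C_k^i \hookrightarrow C_k^{i+p}$, and because the weighted boundary maps are defined by the same formula (Definition \ref{boundary}) on simplices in $K^i$, they restrict compatibly; consequently $Z_k^i \subseteq Z_k^{i+p}$ and $B_k^i \subseteq B_k^{i+p}$. In particular, the map $\eta_k^{i,p}$ is well-defined (as the excerpt already notes) and the image of $\eta_k^{i,p}$ consists precisely of cosets of the form $\alpha + B_k^{i+p}$ with $\alpha \in Z_k^i$.

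Next I would introduce the auxiliary homomorphism $\varphi: Z_k^i \to H_k^{i+p}$ defined by $\varphi(\alpha) = \alpha + B_k^{i+p}$. This is the composition of the inclusion $Z_k^i \hookrightarrow Z_k^{i+p}$ with the quotient map $Z_k^{i+p} \twoheadrightarrow Z_k^{i+p}/B_k^{i+p} = H_k^{i+p}$, so it is an $R$-module homomorphism. Its image is exactly $\{\alpha + B_k^{i+p} : \alpha \in Z_k^i\}$, which coincides with $\operatorname{Im} \eta_k^{i,p}$ by the description above. For the kernel, $\varphi(\alpha) = 0$ iff $\alpha \in B_k^{i+p}$, and combined with $\alpha \in Z_k^i$ this gives $\ker \varphi = Z_k^i \cap B_k^{i+p}$.

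Applying the first isomorphism theorem to $\varphi$ then yields
\[
\operatorname{Im} \eta_k^{i,p} \;=\; \operatorname{Im} \varphi \;\cong\; Z_k^i / (Z_k^i \cap B_k^{i+p}) \;=\; H_k^{i,p},
\]
which is the desired isomorphism. I do not anticipate any real obstacle; the only thing worth double-checking is that the weighted boundary formula from Definition \ref{boundary} produces the same value on $\sigma \in K^i$ regardless of whether we compute $\partial_k^i(\sigma)$ or $\partial_k^{i+p}(\sigma)$, which is immediate since the weight function on $K^{i+p}$ restricts to the weight function on $K^i$ and the face maps $d_i$ act identically on $\sigma$ in either complex. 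This justifies the inclusions $Z_k^i \subseteq Z_k^{i+p}$ and $B_k^i \subseteq B_k^{i+p}$ that underlie the entire argument, after which the conclusion is a one-line application of a standard theorem.
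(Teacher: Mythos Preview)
Your proof is correct and follows essentially the same route as the paper: both apply the first isomorphism theorem to a map into $H_k^{i+p}$ and identify the relevant kernel with $B_k^{i+p}\cap Z_k^i$. The only difference is that the paper applies the theorem to $\eta_k^{i,p}$ itself (a map out of the quotient $H_k^i$) and then invokes the third isomorphism theorem to simplify the resulting double quotient, whereas you work directly from $Z_k^i$ via your auxiliary $\varphi$ and thereby bypass that extra step.
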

\begin{proof}
By the first isomorphism theorem, we have \[\Ima \eta_k^{i,p}\cong H_k^i/\ker \eta_k^{i,p}.\]
Note that
\begin{equation}
\begin{split}
&\alpha+B_k^i\in\ker\eta_k^{i,p}\\
&\iff \alpha+B_k^{i+p}=B_k^{i+p}\  \text{and}\ \alpha\in Z_k^i\\
&\iff\alpha\in B_k^{i+p}\cap Z_k^i\\
&\iff\alpha+B_k^i\in(B_k^{i+p}\cap Z_k^i)/B_k^i.
\end{split}
\end{equation} Hence \[\ker\eta_k^{i,p}=(B_k^{i+p}\cap Z_k^i)/B_k^i.\]
Hence we have
\begin{align*}
\Ima \eta_k^{i,p}&\cong H_k^i/\ker\eta_k^{i,p}\\
&=\frac{Z_k^i/B_k^i}{(B_k^{i+p}\cap Z_k^i)/B_k^i}\\
&\cong Z_k^i/(B_k^{i+p}\cap Z_k^i) \tag{by the third isomorphism theorem}\\
&=H_k^{i,p}.
\end{align*}
\end{proof}
\section{Applications}
Weighted persistent homology can tell apart filtrations that ordinary persistent homology does not distinguish. For instance, if there is a special point, weighted persistent homology can tell when a cycle containing the point is formed or has disappeared. This is a generalization of the main feature of persistent homology which is to detect the ``birth'' and ``death'' of cycles. We illustrate this in the following example.
\begin{eg}
\label{eg:finalfigure}
\begin{figure}[htbp]
\begin{subfigure}{0.2\textwidth}
\begin{tikzpicture}[scale=0.5]
\draw (1,3) node[anchor=east] {$v_0$};
\draw (1,1) node[anchor=east] {$v_1$};
\draw (3,3) node[anchor=west] {$v_3$};
\draw (3,1) node[anchor=west] {$v_2$};
\draw [fill=black] (1,3) circle (2.5pt);
\draw [fill=black] (1,1) circle (2.5pt);
\draw [fill=black] (3,3) circle (2.5pt);
\draw [fill=black] (3,1) circle (2.5pt);
\end{tikzpicture}
\caption{$K^0$}
\end{subfigure}
\begin{subfigure}{0.2\textwidth}
\begin{tikzpicture}[scale=0.5]
\draw (1,3) node[anchor=east] {$v_0$};
\draw (1,1) node[anchor=east] {$v_1$};
\draw (3,3) node[anchor=west] {$v_3$};
\draw (3,1) node[anchor=west] {$v_2$};
\draw [fill=black] (1,3) circle (2.5pt);
\draw [fill=black] (1,1) circle (2.5pt);
\draw [fill=black] (3,3) circle (2.5pt);
\draw [fill=black] (3,1) circle (2.5pt);
\draw (1,3)--(1,1)--(3,3)--cycle;
\end{tikzpicture}
\caption{$K^1$}
\end{subfigure}
\begin{subfigure}{0.2\textwidth}
\begin{tikzpicture}[scale=0.5]
\draw (1,3) node[anchor=east] {$v_0$};
\draw (1,1) node[anchor=east] {$v_1$};
\draw (3,3) node[anchor=west] {$v_3$};
\draw (3,1) node[anchor=west] {$v_2$};
\draw [fill=black] (1,3) circle (2.5pt);
\draw [fill=black] (1,1) circle (2.5pt);
\draw [fill=black] (3,3) circle (2.5pt);
\draw [fill=black] (3,1) circle (2.5pt);
\draw (1,3)--(1,1)--(3,3)--cycle;
\draw (3,3)--(3,1);
\end{tikzpicture}
\caption{$K^2$}
\end{subfigure}
\begin{subfigure}{0.2\textwidth}
\begin{tikzpicture}[scale=0.5]
\draw (1,3) node[anchor=east] {$v_0$};
\draw (1,1) node[anchor=east] {$v_1$};
\draw (3,3) node[anchor=west] {$v_3$};
\draw (3,1) node[anchor=west] {$v_2$};
\draw [fill=black] (1,3) circle (2.5pt);
\draw [fill=black] (1,1) circle (2.5pt);
\draw [fill=black] (3,3) circle (2.5pt);
\draw [fill=black] (3,1) circle (2.5pt);
\filldraw [fill=gray] (1,3)--(1,1)--(3,3)--cycle;
\draw (3,3)--(3,1);
\end{tikzpicture}
\caption{$K^3$}
\end{subfigure}
\caption{The filtration $\mathcal{K}=\{K^0,K^1,K^2,K^3\}$, where the shaded region denotes the 2-simplex $[v_0,v_1,v_3]$.}
\label{fig:filtK}
\end{figure}
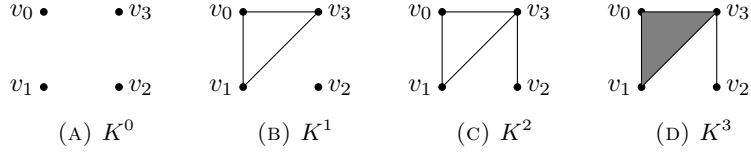
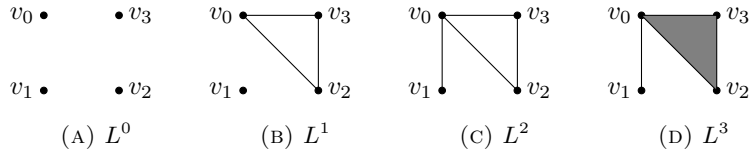
\begin{figure}[htbp]
\begin{subfigure}{0.2\textwidth}
\begin{tikzpicture}[scale=0.5]
\draw (1,3) node[anchor=east] {$v_0$};
\draw (1,1) node[anchor=east] {$v_1$};
\draw (3,3) node[anchor=west] {$v_3$};
\draw (3,1) node[anchor=west] {$v_2$};
\draw [fill=black] (1,3) circle (2.5pt);
\draw [fill=black] (1,1) circle (2.5pt);
\draw [fill=black] (3,3) circle (2.5pt);
\draw [fill=black] (3,1) circle (2.5pt);
\end{tikzpicture}
\caption{$L^0$}
\end{subfigure}
\begin{subfigure}{0.2\textwidth}
\begin{tikzpicture}[scale=0.5]
\draw (1,3) node[anchor=east] {$v_0$};
\draw (1,1) node[anchor=east] {$v_1$};
\draw (3,3) node[anchor=west] {$v_3$};
\draw (3,1) node[anchor=west] {$v_2$};
\draw [fill=black] (1,3) circle (2.5pt);
\draw [fill=black] (1,1) circle (2.5pt);
\draw [fill=black] (3,3) circle (2.5pt);
\draw [fill=black] (3,1) circle (2.5pt);
\draw (1,3)--(3,3)--(3,1)--cycle;
\end{tikzpicture}
\caption{$L^1$}
\end{subfigure}
\begin{subfigure}{0.2\textwidth}
\begin{tikzpicture}[scale=0.5]
\draw (1,3) node[anchor=east] {$v_0$};
\draw (1,1) node[anchor=east] {$v_1$};
\draw (3,3) node[anchor=west] {$v_3$};
\draw (3,1) node[anchor=west] {$v_2$};
\draw [fill=black] (1,3) circle (2.5pt);
\draw [fill=black] (1,1) circle (2.5pt);
\draw [fill=black] (3,3) circle (2.5pt);
\draw [fill=black] (3,1) circle (2.5pt);
\draw (1,3)--(3,3)--(3,1)--cycle;
\draw (1,3)--(1,1);
\end{tikzpicture}
\caption{$L^2$}
\end{subfigure}
\begin{subfigure}{0.2\textwidth}
\begin{tikzpicture}[scale=0.5]
\draw (1,3) node[anchor=east] {$v_0$};
\draw (1,1) node[anchor=east] {$v_1$};
\draw (3,3) node[anchor=west] {$v_3$};
\draw (3,1) node[anchor=west] {$v_2$};
\draw [fill=black] (1,3) circle (2.5pt);
\draw [fill=black] (1,1) circle (2.5pt);
\draw [fill=black] (3,3) circle (2.5pt);
\draw [fill=black] (3,1) circle (2.5pt);
\filldraw [fill=gray] (1,3)--(3,3)--(3,1)--cycle;
\draw (1,3)--(1,1);
\end{tikzpicture}
\caption{$L^3$}
\end{subfigure}
\caption{The filtration $\mathcal{L}=\{L^0,L^1,L^2,L^3\}$, where the shaded region denotes the 2-simplex $[v_0,v_2,v_3]$.}
\label{fig:filtL}
\end{figure}
Consider the two filtrations as shown in Figure \ref{fig:filtK} and \ref{fig:filtL}. By symmetry, it is clear that the (unweighted) persistent homology groups of the two filtrations will be the same.

Suppose we consider $v_2$ as a special point and wish to tell through weighted persistent homology whether a 1-cycle containing $v_2$ is formed or has disappeared. We can achieve it by the following weight function (choosing $R=\mathbb{Z}$). Let $w$ be the weight function such that all 2-dimensional (and higher) simplices containing $v_2$ have weight 2, while all other simplices have weight 1. In our example, this means $w([v_0,v_2,v_3])=2$ while $w(\sigma)=1$ for all $\sigma\neq [v_0,v_2,v_3]$. 

Then for the filtration $\mathcal{K}=\{K^0,K^1,K^2,K^3\}$ we have
\begin{align*}
Z_1^1&=\ker(\partial_1^1)=\langle[v_0,v_1]-[v_0,v_3]+[v_1,v_3]\rangle\\
\partial_2^3([v_0,v_1,v_3])&=[v_1,v_3]-[v_0,v_3]+[v_0,v_1]\\
\partial_2^1&=\partial_2^2=0.
\end{align*}
Hence, we have \[H_1^{1,p}(\mathcal{K},w)=\begin{cases}\mathbb{Z} &\text{for}\ p=0,1\\
0 &\text{for}\ p=2.
\end{cases}\]

However for the filtration $\mathcal{L}=\{L^0,L^1,L^2,L^3\}$ we have
\begin{align*}
Z_1^1&=\ker(\partial_1^1)=\langle [v_2,v_3]-[v_0,v_3]+[v_0,v_2]\rangle\\
\partial_2^3([v_0,v_2,v_3])&=2[v_2,v_3]-2[v_0,v_3]+2[v_0,v_2]\\
\partial_2^1&=\partial_2^2=0
\end{align*}
so that
\begin{equation}
\label{torsionegeqn}
H_1^{1,p}(\mathcal{L},w)=\begin{cases}
\mathbb{Z} &\text{for}\ p=0,1\\
\mathbb{Z}_2 &\text{for}\ p=2.
\end{cases}
\end{equation}
\end{eg}
Referring to Equation (\ref{torsionegeqn}), we can interpret the presence of torsion in $H_1^{1,2}(\mathcal{L},w)$ to mean that a 1-cycle containing $v_2$ is formed in $L^1$, persists in $L^2$, and disappears in $L^3$.
\begin{remark}
Let $R=\mathbb{Z}$. Generalizing Example \ref{eg:finalfigure}, if there is a special point $v$, we can tell if a $k$-cycle containing $v$ is formed or has disappeared by setting all $k+1$-dimensional and higher simplices containing $v$ to have weight $m\geq 2$, and all other simplices to have weight 1.
\end{remark}

\subsection{Algorithm for PIDs}
For coefficients in a PID, we show that the weighted persistent homology groups are computable. In the seminal paper \cite{Zomorodian2005} by Zomorodian and Carlsson, the authors show an algorithm for persistent homology over a PID. We present an algorithm in this section, which is a weighted modification of the algorithm in \cite{Zomorodian2005} based on the reduction algorithm. We use Figure \ref{fig:filtL} as a running example to illustrate the algorithm.

Let $R$ be a PID. We represent the weighted boundary operator $\partial_n: C_n(K,w)\to C_{n-1}(K,w)$ relative to the standard bases (The standard basis for $C_n(K,w)$ is the set of $n$-simplices of $K$ with nonzero weight (see Definition \ref{chaingroup})) of the respective weighted chain groups as a matrix $M_n$ with entries in $R$. The matrix $M_n$ is called the \emph{standard matrix representation} of $\partial_n$. It has $m_n$ columns and $m_{n-1}$ rows, where $m_n$, $m_{n-1}$ are the number of $n$- and $(n-1)$-simplices with nonzero weights respectively.

In general, due to the weights, the matrix $M_n$ for the weighted boundary map is \emph{different} from that of the unweighted case. For instance, for the unweighted case the matrix representation is restricted to having entries in $\{-1_R,0_R,1_R\}$, while the weighted matrix representation can have entries taking arbitrary values in the ring $R$. In particular, when performing the reduction algorithm, we need to make the modification to allow the following \emph{elementary row operations} on $M_k$:
\begin{enumerate}
\item exchange row $i$ and row $j$,
\item multiply row $i$ by a unit $u\in R\setminus\{0\}$,
\item replace row $i$ by (row $i$)+$q$(row $j$), where $q\in R\setminus\{0\}$ and $j\neq i$.
\end{enumerate}
Note that for the unweighted case \cite[p.~5]{Zomorodian2005}, the second elementary row operation was ``multiply row $i$ by $-1$''. A similar modification is also needed for the \emph{elementary column operations}.

The subsequent steps are similar to that of the unweighted case (cf. \cite[pp.~5,12]{Zomorodian2005}). We summarize the algorithm (Algorithm \ref{alg:reduction}) and refer the reader to \cite[p.~5]{Zomorodian2005} for more information on the reduction algorithm and the Smith normal form.

Given a weighted filtered complex $\{(K^i,w)\}_{i\geq0}$, we write $M_k^i$ to denote the standard matrix representation of $\partial_k^i$. We perform the Algorithm \ref{alg:reduction} to obtain the weighted homology groups.

\begin{algorithm}
\caption{Weighted Persistent Homology Algorithm for PIDs (cf.\ {\cite[p.~12]{Zomorodian2005}})}
\label{alg:reduction}
\begin{flushleft}
\textbf{Input}: Weighted filtered complex $(\mathcal{K},w)=\{(K^i,w)\}_{i\geq0}$

\textbf{Output}: Weighted $p$-persistent $k$th homology group $H_k^{i,p}(\mathcal{K},w)$
\end{flushleft}
\begin{enumerate}
\item Reduce the matrix $M_k^i$ to its Smith normal form and obtain a basis $\{z^j\}$ for $Z_k^i$.
\item Reduce the matrix $M_{k+1}^{i+p}$ to its Smith normal form and obtain a basis $\{b^l\}$ for $B_k^{i+p}$.
\item Let $A=[\{b^l\}\;\{z^j\}]=[B\;Z]$, i.e.\ the columns of matrix $A$ consist of the basis elements computed in the previous steps, with respect to the standard basis of $C_k(K^{i+p},w)$. We reduce $A$ to its Smith normal form to find a basis $\{a^q\}$ for its nullspace. 
\item Each $a^q=[\alpha^q\;\beta^q]$, where $\alpha^q$, $\beta^q$ are column vectors of coefficients of $\{b^l\}$, $\{z^j\}$ respectively. Since $Au^q=B\alpha^q+Z\beta^q=0$, the element $\beta\alpha^q=-Z\beta^q$ belongs to the span of both bases $\{z^j\}$ and $\{b^l\}$. Hence, both $\{B\alpha^q\}$ and $\{Z\beta^q\}$ are bases for $B_k^{i,p}=B_k^{i+p}\cap Z_k^i$. Using either, we form the matrix $M_{k+1}^{i,p}$ using the basis. The number of columns of $M_{k+1}^{i,p}$ is the cardinality of the basis for $B_k^{i,p}$, while the number of rows is the cardinality of the standard basis for $C_k(K^{i+p},w)$.
\item We reduce $M_{k+1}^{i,p}$ to Smith normal form to read off the torsion coefficients of $H_k^{i,p}(\mathcal{K},w)$ and the rank of $B_k^{i,p}$.
\item The rank of the free submodule of $H_k^{i,p}(\mathcal{K},w)$ is the rank of $Z_k^i$ minus the rank of $B_k^{i,p}$.
\end{enumerate}
 \end{algorithm}
 
 We illustrate the algorithm using Example \ref{eg:reduction}.
 \begin{eg}
 \label{eg:reduction}
 Consider the filtration $\mathcal{L}=\{L^0,L^1,L^2,L^3\}$ in Figure \ref{fig:filtL}. We have
 \begin{equation}
 \begin{split}
 M^1_1&=\mleft[\begin{array}{c|ccc}
 &[v_0,v_3] &[v_0,v_2] &[v_2,v_3]\\
    \hline
v_0 &-1 &-1 &0\\
v_1 & 0 & 0 & 0\\
v_2 & 0 & 1 & -1\\
v_3 & 1 & 0 & 1
 \end{array}
\mright]\\
&\xrightarrow{\text{reduce}}\mleft[\begin{array}{c|ccc}
 &[v_0,v_3] &[v_0,v_2] &[v_2,v_3]-[v_0,v_3]+[v_0,v_2]\\
    \hline
v_3-v_0 &1 &0 &0\\
v_2-v_0 & 0 &1 &0\\
v_1 & 0 &0 &0\\
v_2 & 0 & 0 & 0
 \end{array}
\mright].
\end{split}
 \end{equation}
 
 Hence a basis for $Z_1^1$ is $\{[v_2,v_3]-[v_0,v_3]+[v_0,v_2]\}$.

 \begin{equation}
 \begin{split}
M_2^3&=\mleft[\begin{array}{c|c}
&[v_0,v_2,v_3]\\
\hline
[v_0,v_1] &0\\
{[}v_0,v_2] &2\\
{[}v_0,v_3] &-2\\
{[}v_2,v_3] &2
\end{array}
\mright]\\
&\xrightarrow{\text{reduce}}\mleft[\begin{array}{c|c}
&[v_0,v_2,v_3]\\
\hline
[v_0,v_2]-[v_0,v_3]+[v_2,v_3] &2\\
{[}v_0,v_3] &0\\
{[}v_2,v_3] &0\\
{[}v_0,v_1] &0
\end{array}
\mright]
\end{split}
\end{equation}

Hence a basis for $B_1^3$ is $\{2[v_0,v_2]-2[v_0,v_3]+2[v_2,v_3]\}$. Let $b=2[v_0,v_2]-2[v_0,v_3]+2[v_2,v_3]$ and $z=[v_0,v_2]-[v_0,v_3]+[v_2,v_3]$.

\begin{equation}
A=[B\;Z]\\=\mleft[\begin{array}{c|cc}
&b &z\\
\hline
{[}v_0,v_1] &0 &0\\
{[}v_0,v_2] &2 &1\\
{[}v_0,v_3] &-2 &-1\\
{[}v_2,v_3] &2 &1
\end{array}\mright]\\
\xrightarrow{reduce}\mleft[\begin{array}{c|cc}
&z &b-2z\\
\hline
z & 1 &0\\
{[}v_0,v_3] &0 &0\\
{[}v_2,v_3] &0 &0\\
{[}v_0,v_1] &0 &0
\end{array}\mright]
\end{equation}

Hence a basis for the nullspace of $A$ is $\{b-2z\}$. In this context, a basis for $B_1^{1,2}$ is $\{B\alpha^q\}=\{b\}$. Hence we form a matrix
\begin{equation}
\label{eq:torsionmatrix}
M_2^{1,2}=\mleft[
\begin{array}{c|c}
&b\\
\hline
{[}v_0,v_1] &0\\
{[}v_0,v_2] &2\\
{[}v_0,v_3] &-2\\
{[v}_2,v_3] &2
\end{array}
\mright]\\
\xrightarrow{\text{reduce}}\mleft[
\begin{array}{c|c}
&b\\
\hline
z &2\\
{[}v_0,v_1] &0\\
{[}v_0,v_3] &0\\
{[}v_2,v_3] &0
\end{array}
\mright].
\end{equation} 
Since both $Z_1^1$ and $B_1^{1,2}$ have rank 1, the rank of the free part of $H_1^{1,2}(\mathcal{L},w)$ is $1-1=0$. We read off (\ref{eq:torsionmatrix}) and conclude that $H_1^{1,2}(\mathcal{L},w)=\mathbb{Z}_2$, which agrees with our previous computation in Example \ref{eg:finalfigure}.
\end{eg}
\bibliographystyle{amsplain}
\bibliography{jabref5}

\end{document}